\newtheorem{definition}{Definition}
\newtheorem{theorem}{Theorem}
\newtheorem{corollary}{Corollary}
\newproof{proof}{Proof}
\def\acurve#1 #2*#3*{%
\pscurve#2#3
\psecurve[linestyle=none]{#1}#2}
\begin{document}

\title{Variable stepsize SDIMSIMs
for ordinary differential equations}
\author{A. Jalilian}
\ead{a.jalilian@tabrizu.ac.ir}

\author{A.~Abdi\corref{cor}}
\ead{a\_abdi@tabrizu.ac.ir}

\author{G.~Hojjati}
\ead{ghojjati@tabrizu.ac.ir}

\cortext[cor]{Corresponding author}

\address{Faculty of Mathematical Sciences, University of Tabriz,
Tabriz, Iran}
\begin{abstract}
Second derivative general linear methods (SGLMs) have been already
implemented in a variable stepsize environment using Nordsieck
technique. In this paper, we introduce variable stepsize SGLMs
directly on nonuniform grid. By deriving the order conditions of
the proposed methods of order $p$ and stage order $q=p$, some
explicit examples of these methods up to order four are given. By
some numerical experiments, we show the efficiency of the proposed
methods in solving nonstiff problems and confirm the theoretical
order of convergence.
\end{abstract}
\begin{keyword}
Ordinary differential equations \sep General linear methods \sep
Second derivative methods \sep Variable stepsize \sep Order
conditions.
\end{keyword}

\maketitle
\setcounter{equation}{0} \setcounter{definition}{0}
\setcounter{theorem}{0}
\section{Introduction}\label{Sec1}
Efficiency of the constructed methods for the numerical solution
of initial value problem of ordinary differential equations (ODEs)
\begin{align}\label{ODEs}
\left\{\begin{array}{l}
y'(x) = f(y(x)),\hspace{1cm} x\in[x_0,X],\\[2mm]
y(x_0)=y_0,
\end{array}
\right.
\end{align}
where $f:\mathbb{R}^m\rightarrow \mathbb{R}^m$ is sufficiently
smooth function and $m$ is the dimensional of system, does depend
on designing of their implementation issues. Indeed, reliable
codes are developed based on the methods with desirable features
and more sophisticated implementation strategies. Among all the
classes of numerical methods, the class of general linear methods
(GLMs)---due to the fact of their comprehensive structure---is one
of the best. In fact, GLMs are multivalue-multistage methods which
include linear multistep and Rungee--Kutta methods as special
cases. These methods which were introduced by Butcher
\cite{But1966,But2016} take the form
\begin{align}\label{GLMs}
\left\{\begin{array}{l} \displaystyle
Y_{i}^{[n+1]}=h\sum_{j=1}^{s}a_{ij}f(Y_{j}^{[n+1]})
+\sum_{j=1}^{r}u_{ij}y_{j}^{[n]},\quad i=1,\ldots,s,\\
\displaystyle y_{i}^{[n+1]}= h\sum_{j=1}^{s}b_{ij}f(Y_{j}^{[n+1]})
+\sum_{j=1}^{r}v_{ij}y_{j}^{[n]},\quad i=1,\ldots,r,
\end{array}
\right.
\end{align}
where $n=0,1,\ldots,N-1$, $Nh=X-x_{0}$, $s$ is the number of
internal stages and $r$ is the number of input and output values.
Here, $Y_i^{[n+1]}$, $i=1,2,\ldots,s$, are an approximation of the
stage order $q$ to the solution $y$ at the points $x_{n}+c_ih$,
with $c=[c_{1}\hspace{2mm} c_2\hspace{2mm}\cdots\hspace{2mm}
c_{s}]^{T}$ as the abscissa vector. Moreover, the values
$y_i^{[n+1]}$, $i=1,2,\ldots,r$, are approximations of order $p$
to the linear combinations of scaled derivatives of the solution
$y$ at the points $x_{n+1}$, i.e.,
\[
y_i^{[n+1]}=\alpha_{i0}y(x_{n+1})+\alpha_{i1}hy'(x_{n+1})
+\alpha_{i2}h^2y''(x_{n+1})+\cdots+\alpha_{ip}h^py^{(p)}(x_{n+1})
+O(h^{p+1}),
\]
for some real numbers
$\alpha_{i1},\alpha_{i2},\ldots,\alpha_{ip}$. These values denote
the computed quantities at step $n+1$, which also represent the
incoming values for step $n+2$. Diagonally implicit multistage
integration methods (DIMSIMs) as a large subclass of GLMs have
been studied and discussed by Butcher and Jackiewicz, for instance
in \cite{ButJac1997,ButJack1998,ButJackMitt1997,Jackbook2009}.
Implementation issues for DIMSIMs have been studied in
\cite{ButJack1997,ButJack2001,ButPod2006} and efficient
\textsc{Matlab} codes \texttt{dim18.m} and \texttt{dim13s.m} based
on these methods using the Nordsieck technique have been developed
in \cite{ButJac1999,Jackbook2002}. Furthermore, another reliable
code has been developed based on the Nordsieck GLMs in which the
matrix $W:=[\alpha_{ij}]$ is equal to the identity matrix, i.e.,
the input and output vectors $y^{[n-1]}$ and $y^{[n]}$ approximate
the Nordsieck vector $z(x,h)$ defined by
\[
z(x,h)=\left[%
\begin{array}{c}
  y(x) \\[2mm]
  hy'(x) \\
  \vdots \\[1mm]
  h^py^{(p)}(x) \\
\end{array}%
\right],
\]
at the points $x=x_{n-1}$ and $x=x_n$, respectively. Also, in
\cite{Jack1995}, the authors studied a class of variable stepsize
DIMSIMs which provides an alternative to the Nordsieck technique
of changing the stepsize of integration. This approach is based on
the derivation of the methods based directly on nonuniform grid so
that the coefficients matrices $A:=[a_{ij}]$, $U:=[u_{ij}]$,
$B:=[b_{ij}]$, $V:=[v_{ij}]$, the vector $c$ as well as the matrix
$W$ depend, in general, on the ratios of the current stepsize and
the past stepsizes.

To increase the chances of finding methods with high order
accuracy together with desirable stability properties, higher
derivatives, especially second derivative, of the solution can be
used. Therefore, several researches have been focused on the
construction of the methods incorporating the second derivative of
the solution, see, for instance,
\cite{Cash1981,Chan2010,Enrigh1972,Hairer,Hoj2006}. Trying to
construct second derivative methods in a unified structure and
with efficient properties leads to the second derivative general
linear methods (SGLMs) where were first introduced by Butcher and
Hojjati in \cite{ButHoj2005} and more investigated by Abdi et al.
in
\cite{AbdB2018,AbdBra2018,AbdCon,AbdHoj2011a,AbdHoj2011,AbdHoj2015}.
Denoting the function $g:=f'(\cdot)f(\cdot)$ as the second
derivative of the solution $y$ and using of the previous
notations, the SGLMs take the form
\begin{align}\label{SGLMs}
\left\{\begin{array}{l} \displaystyle
Y_{i}^{[n+1]}=h\sum_{j=1}^{s}a_{ij}f(Y_{j}^{[n+1]})+h^{2}\sum_{j=1}^{s}\overline{a}_{ij}g(Y_{j}^{[n+1]})+ \sum_{j=1}^{r}u_{ij}y_{j}^{[n]},\quad i=1,\ldots,s,\\
\displaystyle y_{i}^{[n+1]}=
h\sum_{j=1}^{s}b_{ij}f(Y_{j}^{[n+1]})+h^{2}\sum_{j=1}^{s}\overline{b}_{ij}g(Y_{j}^{[n+1]})+
\sum_{j=1}^{r}v_{ij}y_{j}^{[n]},\quad i=1,\ldots,r.
\end{array}
\right.
\end{align}
The SGLM \eqref{SGLMs} has order $p$ and stage order $q=p$ if and
only if \cite{AbdHoj2011}
\begin{equation}\label{OC.SGLMs}
\begin{array}{r}
C=ACK+\overline{A}CK^2+UW,\\[2mm]
WE=BCK+\overline{B}CK^2+VW,
\end{array}
\end{equation}
where the matrices $C$, $K$, and $E$ are determined by
\begin{align*}
C&:=\left[e\hspace{2mm}{c}\hspace{2mm}\frac{c^2}{2!}\hspace{2mm}
\cdots\hspace{2mm}
\frac{c^p}{p!}\right]\in\mathbb{R}^{s\times(p+1)},\quad
K:=[0\hspace{2mm}e_1\hspace{2mm}e_2\hspace{2mm}\cdots\hspace{2mm}e_p]
\in\mathbb{R}^{(p+1)\times(p+1)},\\[2mm]
E&:=\exp(K)\in\mathbb{R}^{(p+1)\times(p+1)}
\end{align*}
with
$e=[1\hspace{3mm}1\hspace{3mm}\cdots\hspace{3mm}1]^T\in\mathbb{R}^s$
and $e_j$ as the $j$th vector of the canonical basis in
$\mathbb{R}^{p+1}$.

Construction and implementation of the Nordsieck SGLMs have been
discussed in \cite{AbdB2018,AbdHoj2015}. Furthermore, in
\cite{AbdCon}, the authors have investigated the implementation of
SGLMs in a variable stepsize environment using the Nordsieck
technique and the practical \textsc{Matlab} code \texttt{SGLM4.m}
based on an $L$--stable SGLM of order four has been developed.

Second derivative diagonally implicit multistage integration
methods (SDIMSIMs) as a subclass of SGLMs have been introduced in
\cite{AbdHoj2011a} in which usually $p=q=r=s$ and the matrix $V$
is a rank-one matrix. The spirit of this paper is to introduce
SDIMSIMs directly on nonuniform grid. For such methods, it is not
required to update the input vector for the new stepsize;
actually, the output vector of the last step can be directly used
in the next step as the input vector.

The rest of the paper is organized along the following lines. In
Section \ref{Sec2}, we formulate the SDIMSIMs \eqref{SGLMs} as
variable stepsize methods directly on nonuniform grid. Then, the
order conditions of these methods in the case that the stage order
is equal to the order of the methods, are derived. Section
\ref{Sec3} is devoted to the construction of such methods in a
special class up to order four. To show the efficiency of the
constructed methods, some numerical results are provided in
Section \ref{Sec4}.

\setcounter{equation}{0} \setcounter{definition}{0}
\setcounter{theorem}{0}
\section{Variable stepsize formulation of SDIMSIMs}\label{Sec2}
The purpose of this section is to formulate the SDIMSIMs
\eqref{SGLMs} as variable stepsize (VS) methods in which the
coefficients matrices of the methods, the abscissa vector $c$ and
the parameteres $\alpha_{ik}$ depend on the ratios of the current
stepsize and the previous stepsizes. To accomplish this, for a
given integer $\rho$, consider a nonuniform grid
\begin{align}\label{mesh}
x_{-\rho}<\cdots<x_{-1}<x_{0}<x_{1}<\cdots<x_{N},\quad x_{N}>X,
\end{align}
with $h_{n}=x_{n+1}-x_{n}$, $n= -\rho,-\rho+1,\ldots,N+1$. Let us
denote $\sigma_{n,i}=h_{n-i}/h_{n}$, $i=1,2,\ldots,\rho$ and
$\sigma_{n}=[\sigma_{n,1}\hspace{2mm}\sigma_{n,2}\hspace{2mm}\ldots\hspace{2mm}\sigma_{n,\rho}]^{T}$,
$n= -\rho,-\rho+1,\ldots,N+1$. Here the artificial points
$x_{-\rho},\ldots,x_{-1}$ are introduced only for notational
convenience and we will start the integration process from
$x_\rho$ which only requires the points
$\left\{x_n\right\}_{n=0}^{N}$. In fact, we discuss VS SDIMSIMs in
the form
\begin{align}\label{vs-SGLMs}
\left\{\begin{array}{l} \displaystyle
Y_{i}^{[n+1]}=h_n\sum_{j=1}^{s}a_{ij}(\sigma_n)f(Y_{j}^{[n+1]})
+h_n^{2}\sum_{j=1}^{s}\overline{a}_{ij}(\sigma_n)g(Y_{j}^{[n+1]})+
\sum_{j=1}^{r}u_{ij}(\sigma_n)y_{j}^{[n]},\quad i=1,\ldots,s,\\
\displaystyle y_{i}^{[n+1]}=
h_n\sum_{j=1}^{s}b_{ij}(\sigma_n)f(Y_{j}^{[n+1]})+
h_n^{2}\sum_{j=1}^{s}\overline{b}_{ij}(\sigma_n)g(Y_{j}^{[n+1]})+
\sum_{j=1}^{r}v_{ij}(\sigma_n)y_{j}^{[n]},\quad i=1,\ldots,r.
\end{array}
\right.
\end{align}
$n=0,1,\ldots,N-1$, where  $Y_i^{[n+1]}$, $i=1,2,\ldots,s$, are
approximations to $y(x_{n}+c_i (\sigma_{n})h_{n})$ and $I_m$
stands for the identity matrix of dimension $m$. Here, the
starting values $y_i^{[0]}$, $i=1,2,\ldots,r$, approximate the
linear combinations of $y(x_{-\rho}),y(x_{-\rho+1}), \ldots,
y(x_0)$ of order $p$. Defining the matrices
\begin{align*}
A(\sigma_{n})&:=[a_{ij}(\sigma_{n})] \in\mathbb{R}^{s\times
s},\hspace{3mm}\overline{A}(\sigma_{n}):=[\overline{a}_{ij}(\sigma_{n})]
\in\mathbb{R}^{s\times s}\hspace{3mm}
U(\sigma_{n}):=[u_{ij}(\sigma_{n})] \in\mathbb{R}^{s\times
r},\\[2mm]
B(\sigma_{n})&:=[b_{ij}(\sigma_{n})] \in\mathbb{R}^{r\times
s},\hspace{3Mm}
\overline{B}(\sigma_{n}):=[\overline{b}_{ij}(\sigma_{n})]
\in\mathbb{R}^{r\times s},\hspace{3mm}
V(\sigma_{n}):=[v_{ij}(\sigma_{n})] \in\mathbb{R}^{r\times r},
\end{align*}
the methods \eqref{vs-SGLMs} can be written in the compact form
\begin{equation}\label{VS.SGLMs}
\left\{\begin{array}{l}
Y^{[n+1]}=h_n(A(\sigma_n)\otimes I_m)f(Y^{[n+1]})+h_n^{2}(\overline{A}(\sigma_{n})\otimes I_m)g(Y^{[n+1]})+ (U(\sigma_{n})\otimes I_m) y^{[n]},\\[5mm]
y^{[n+1]}=h_n(B(\sigma_n)\otimes I_m)
f(Y^{[n+1]})+h_n^{2}(\overline{B}(\sigma_{n})\otimes
I_m)g(Y^{[n+1]})+ (V(\sigma_{n})\otimes I_m) y^{[n]}.
\end{array}
\right.
\end{equation}
Through the paper, we assume that $p=q=r=s$ and $V(\sigma_{n})$ is
a rank-one matrix, i.e. $V(\sigma_{n})=e\,v(\sigma_{n})^T$ where
$v(\sigma_{n})=[v_1(\sigma_{n})\hspace{2mm}v_2(\sigma_{n})\hspace{2mm}\ldots\hspace{2mm}v_r(\sigma_{n})]^T$,
and $v(\sigma_{n})^Te=1$. The latter guarantees that the method
\eqref{VS.SGLMs} is unconditionally zero--stable for any step size
pattern; indeed, applying the method \eqref{VS.SGLMs} to $y'=0$
leads to
\begin{align*}
y^{[n]}&=\Bigl(\prod_{j=0}^{n}V(\sigma_{n-j})\Bigr)^ny^{[0]}\\
&=\Bigl(\prod_{j=0}^{n}ev^T(\sigma_{n-j})\Bigr)^ny^{[0]}\\
&=\bigl(ev(\sigma_0)\bigr)^ny^{[0]},
\end{align*}
in which the matrix $ev(\sigma_0)$ is a rank-one matrix with
nonzero eigenvalue equal to one. Moreover, applying the method
\eqref{VS.SGLMs} to $y'=\xi y$ leads to
\begin{align*}
y^{[n]}&=M(z_n;\sigma_n)y^{[n-1]}\\
&=\prod_{j=1}^nM(z_j;\sigma_j)\cdot y^{[0]},
\end{align*}
in which the \emph{propagation matrix} $M(z_j;\sigma_j)$ is
defined by
\[
M(z_j;\sigma_j):=V(\sigma_j)+\bigl(z_jB(\sigma_j)+z_j^2
\overline{B}(\sigma_j)\bigr)\bigl(I-z_jA(\sigma_j)-z_j^2
\overline{A}(\sigma_j)\bigr)^{-1}U(\sigma_j),
\]
with $z_j=\xi h_j$. So, in order to analyze the absolute stability
of such methods, the boundedness of products of propagation
matrices must be examined. It can be based on the spectral radius
of products of propagation matrices which would be too complicated
for arbitrary values of $z_j$. Such analysis for VS BDF methods
has been studied for a special sequences of stepsize ratios in
\cite{cmr1993}. References are also made to
\cite{But2016,gz2001-1,gz2001-2,Jackbook2009} which include some
discussions of stability of VS methods in different classes.

To simplify the presentation, in what follows, we will assume that
$m=1$. To derive the order conditions of VS SDIMSIMs
\eqref{VS.SGLMs}, assume that the vector of stage values
$Y^{[n+1]}$ is an approximation of order $q$ to the vector
$z_1(x_{n+1}):=y(x_n + c(\sigma_n)h_n)$, $n=0,1,\ldots,N-1$, i.e.
\begin{equation}\label{VS.stage}
Y^{[n+1]}=z_1(x_{n+1})+O(h^{q+1}).
\end{equation}
Moreover, we
assume that the input vector $y^{[n]}$ satisfies
\begin{equation}\label{VS.input}
y^{[n]}=z_2(x_{n})+{O}(h^{p+1}),
\end{equation}
with $z_2(x_{n}):=\sum_{l=0}^{\rho}\beta_{l}y(x_{n-l})$ for some
vectors $\beta_l =
[\beta_{1,l}\hspace{2mm}\beta_{2,l}\hspace{2mm}\ldots\hspace{2mm}
\beta_{r,l}]^T$, and $h=\max_{0\leq n\leq N-1} h_{n}$. Then, we
request that
\begin{equation}\label{VS.out}
y^{[n+1]}=z_2(x_{n+1})+{O}(h^{p+1}),
\end{equation}
with the same vectors $\beta_l$, $l=0,1,\ldots,\rho$. Now,
denoting the local discretization errors for the stages and output
values respectively by $h_n d_1 (x_{n+1})$ and $h_n d_2
(x_{n+1})$, we have
\begin{equation}\label{Approximate}
\left\{\begin{array}{l}
z_{1}{(x_{n+1})}=h_{n}A(\sigma_{n})f(z_{1}{(x_{n+1})})
+h_{n}^2\overline{A}(\sigma_{n})g(z_{1}{(x_{n+1})})
+U(\sigma_{n})z_{2}{(x_{n})} +h_n d_1 (x_{n+1}), \\[2mm]
z_{2}{(x_{n+1})}=h_{n}B(\sigma_{n})f(z_{1}{(x_{n+1})}) +
h_{n}^2\overline{B}(\sigma_{n})g(z_{1}{(x_{n+1})})
+V(\sigma_{n})z_{2}{(x_{n})}+h_n d_2 (x_{n+1}),
\end{array}
\right.
\end{equation}
or equivalently,
\begin{align}\label{Errors}
 \begin{array}{l}
 \displaystyle
  h_n d_1 (x_{n+1})=y(x_n + c(\sigma_n)h_n)-
  h_n A(\sigma_{n})y'(x_n+c(\sigma_n)h_n)
  -h_{n}^2\overline{A}(\sigma_{n})y''(x_n + c(\sigma_n)h_n)\\[2mm]
   \displaystyle\hspace{20mm}-U(\sigma_{n}) \sum_{l=0}^{\rho}\beta_l
  y(x_{n-l}),\\[2mm]
 \displaystyle
  h_n d_2 (x_{n+1})=\sum_{l=0}^{\rho}\beta_l y(x_{n+1-l})-
  h_n B(\sigma_{n})y'(x_n + c(\sigma_n)h_n)-
h_{n}^2\overline{B}(\sigma_{n})y''(x_n + c(\sigma_n)h_n)\\[3mm]
 \displaystyle\hspace{20mm}-V(\sigma_n) \sum_{l=0}^{\rho}\beta_l y(x_{n-l}).
 \end{array}
 \end{align}
Expanding $y(x_n + c(\sigma_n)h_n)$, $y'(x_n + c(\sigma_n)h_n)$
and $y''(x_n + c(\sigma_n)h_n)$ around the point $x_n$, we get
\begin{equation}\label{C.Ch}
\left\{
  \begin{array}{ll}
    \displaystyle
    h_n d_1 (x_{n+1})=C_0 (\sigma_n)y(x_n)+\sum_{\mu=1}^p h_n^{\mu}C_{\mu} (\sigma_n)y^{(\mu)}(x_n) + O(h^{p+1}),
    \\[3mm]
    \displaystyle h_n d_2 (x_{n+1})=\widehat{C}_0 (\sigma_n)y(x_n)+\sum_{\mu=1}^p h_n^{\mu}\widehat{C}_{\mu} (\sigma_n)y^{(\mu)}(x_n)+O(h^{p+1}),
  \end{array}
\right.
\end{equation}
in which
\begin{equation}\label{OC.C.formula}
\left\{
\begin{array}{ll}
\displaystyle
C_0 (\sigma_n):=e-U(\sigma_{n}) \sum_{l=0}^{\rho}\beta_l, \\[3mm]
\displaystyle
C_1 (\sigma_n):=c(\sigma_n)-A(\sigma_n)e
+U(\sigma_n)\sum_{l=0}^{\rho}\bigl(\sum _{\nu=1}^{l} \sigma_{n,\nu}\bigr)\beta_l,\\[4mm]
\displaystyle C_{\mu}(\sigma_n):=\frac{c(\sigma_n)^{\mu}}{\mu !}
-A(\sigma_n)\frac{c(\sigma_n)^{\mu-1}}{(\mu-1) !}  -\overline
{A}(\sigma_n)\frac{c(\sigma_n)^{\mu-2}}{(\mu-2) !} -
\frac{(-1)^{\mu}}{\mu !}U(\sigma_n)\sum_{l=0}^{\rho}\bigl(\sum
_{\nu=1}^{l} \sigma_{n,\nu}\bigr)^{\mu}\beta_l,
\end{array}
\right.
\end{equation}
and
\begin{equation}\label{OC.Ch.formula}
\left\{
\begin{array}{ll}
\displaystyle
\widehat{C}_0 (\sigma_n):=(I-V(\sigma_{n})) \sum_{l=0}^{\rho}\beta_l, \\[3mm]
\displaystyle \widehat{C}_1(\sigma_n):=\beta_0-B(\sigma_n)e-
\sum_{l=2}^{\rho}\bigl(\sum _{\nu=1}^{l-1}
\sigma_{n,\nu}\bigr)\beta_l+
V(\sigma_{n})\sum_{l=0}^{\rho}\bigl(\sum _{\nu=1}^{l}\sigma_{n,\nu}\bigr)\beta_l,\\[4mm]
\displaystyle \widehat{C}_{\mu}(\sigma_n):=\frac{\beta_0}{\mu
!}-B(\sigma_{n})\frac{c(\sigma_n)^{\mu-1}}{(\mu-1)
!}-\overline{B}(\sigma_n)\frac{c(\sigma_{n})^{\mu-2}}{(\mu-2) !}
+\frac{(-1)^{\mu}}{\mu !}\Bigl(\sum_{l=2}^{\rho}\bigl(\sum
_{\nu=1}^{l-1}
\sigma_{n,\nu}\bigr)^{\mu} \beta_l\\[4mm]
\displaystyle \hspace{20mm}-V(\sigma_n)\sum_{l=1}^{\rho}\bigl(\sum
_{\nu=1}^{l} \sigma_{n,\nu}\bigr)^{\mu}\beta_{l}\Bigr),
\end{array}
\right.
\end{equation}
for $\mu=2,3,\ldots,p$, where $I$ is identity matrix of dimension
equal to $s$.

\begin{definition}
The method \eqref{VS.SGLMs} is said to be preconsistent if $C_0
(\sigma_n)=\widehat{C}_0 (\sigma_n)=0$. Also, it is said to be
consistent if it is preconsistent and $C_1
(\sigma_n)=\widehat{C}_1 (\sigma_n)=0$.
\end{definition}

It should be noted that the preconsistency is equivalent to the
condition $\sum_{l=0}^{\rho}\beta_l=e$. One can find that the
order and stage order of the method \eqref{VS.SGLMs} is $p$ in
which $p$ is the largest positive integer that
$d_1(x_{n+1})=O(h^p)$ and $d_2 (x_{n+1})=O(h^p)$,
$n=0,1,\ldots,N-1$. To state this more precisely, we give the
following theorem.
\begin{theorem}\label{thm.OC}
Assume that $y^{[n]}$ satisfies \eqref{VS.input}. Then, the VS
SDIMSIM \eqref{VS.SGLMs} has order $p$ and stage order $q=p$
satisfying \eqref{VS.stage} and \eqref{VS.out} if and only if
$C_{\mu} (\sigma_n)=\widehat{C}_{\mu}(\sigma_n)=0$ for
$\mu=0,1,\ldots,p$.
\end{theorem}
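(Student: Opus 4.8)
The plan is to read \eqref{Approximate}--\eqref{C.Ch} as the exact bookkeeping of the residuals $h_n d_1(x_{n+1})$ and $h_n d_2(x_{n+1})$ produced when the true values $z_1(x_{n+1})$, $z_2(x_n)$, $z_2(x_{n+1})$ are inserted into the scheme, and then to move back and forth between the size of these residuals and the size of the genuine errors $Y^{[n+1]}-z_1(x_{n+1})$ and $y^{[n+1]}-z_2(x_{n+1})$. The ``if'' part is a one-step stability estimate resting on the Lipschitz continuity of $f$ and of $g=f'(\cdot)f(\cdot)$ on a compact neighbourhood of the solution (recall $m=1$ throughout); the ``only if'' part inverts this estimate and then uses that the order-$p$ property is imposed for \emph{every} admissible right-hand side in \eqref{ODEs}.

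\smallskip\noindent\emph{Sufficiency.} Suppose $C_\mu(\sigma_n)=\widehat C_\mu(\sigma_n)=0$ for $\mu=0,1,\ldots,p$. Then \eqref{C.Ch} gives at once $h_n d_1(x_{n+1})=O(h^{p+1})$ and $h_n d_2(x_{n+1})=O(h^{p+1})$. Subtracting \eqref{Approximate} from \eqref{VS.SGLMs} and abbreviating $\delta Y:=Y^{[n+1]}-z_1(x_{n+1})$, $\delta y:=y^{[n]}-z_2(x_n)$, the stage part reads
\[
\delta Y=h_nA(\sigma_n)\bigl(f(Y^{[n+1]})-f(z_1(x_{n+1}))\bigr)+h_n^2\overline{A}(\sigma_n)\bigl(g(Y^{[n+1]})-g(z_1(x_{n+1}))\bigr)+U(\sigma_n)\delta y-h_n d_1(x_{n+1}).
\]
Writing the $f$- and $g$-differences via divided-difference diagonal matrices $\Phi,\Psi$ with uniformly bounded entries, this becomes $\bigl(I-h_nA(\sigma_n)\Phi-h_n^2\overline{A}(\sigma_n)\Psi\bigr)\delta Y=U(\sigma_n)\delta y-h_n d_1(x_{n+1})$; for $h$ small the factor is invertible with bounded inverse, so $\|\delta Y\|\le c\bigl(\|\delta y\|+h_n\|d_1(x_{n+1})\|\bigr)=O(h^{p+1})$ using \eqref{VS.input}. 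This is \eqref{VS.stage} with $q=p$. Feeding $\delta Y=O(h^{p+1})$ into the output part of the same difference, and using $\|f(Y^{[n+1]})-f(z_1(x_{n+1}))\|,\,\|g(Y^{[n+1]})-g(z_1(x_{n+1}))\|=O(\|\delta Y\|)$ together with boundedness of $V(\sigma_n)$, yields $y^{[n+1]}-z_2(x_{n+1})=V(\sigma_n)\delta y+O(h^{p+2})-h_n d_2(x_{n+1})=O(h^{p+1})$, i.e. \eqref{VS.out}.

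\smallskip\noindent\emph{Necessity.} Assume \eqref{VS.stage} and \eqref{VS.out} hold, with the input still obeying \eqref{VS.input}. Solving the first line of \eqref{Approximate} for $h_n d_1(x_{n+1})$, subtracting the corresponding line of \eqref{VS.SGLMs}, and bounding the $f$- and $g$-differences as before, every term on the right is $O(h^{p+1})$ by \eqref{VS.stage} and \eqref{VS.input}; hence $h_n d_1(x_{n+1})=O(h^{p+1})$, and symmetrically $h_n d_2(x_{n+1})=O(h^{p+1})$ from \eqref{VS.out}. Substituting these into \eqref{C.Ch} leaves, for the solution at hand,
\[
C_0(\sigma_n)y(x_n)+\sum_{\mu=1}^{p}h_n^{\mu}C_\mu(\sigma_n)y^{(\mu)}(x_n)=O(h^{p+1}),
\]
and the analogue with $\widehat C_\mu$. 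Fix $\sigma_n$ and a node, say $x_n=x_0$; one may pick $f$ in \eqref{ODEs} so that the jet $\bigl(y(x_0),y'(x_0),\ldots,y^{(p)}(x_0)\bigr)$ takes any prescribed values (solve $y'(x_0)=f(y(x_0))$, $y''(x_0)=f'(y(x_0))f(y(x_0))$, $\ldots$ recursively, possible whenever $y'(x_0)\neq0$). Running the method on grids with node $x_n=x_0$ and step $h_n=h$, $h\to0$, holds this jet fixed, so the displayed left-hand side is a polynomial in $h$ of degree $\le p$ that is $O(h^{p+1})$, hence identically zero; therefore $C_\mu(\sigma_n)y^{(\mu)}(x_0)=0$ for $\mu=0,1,\ldots,p$, and varying the jet gives $C_\mu(\sigma_n)=0$. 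The same reasoning applied to the $\widehat C_\mu$-relation gives $\widehat C_\mu(\sigma_n)=0$. (The $\mu=0$ case is exactly preconsistency, i.e. $\sum_{l=0}^{\rho}\beta_l=e$.)

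\smallskip\noindent The step I expect to be the main obstacle is this last one in the necessity part: passing from the scalar bound $h_n d_1(x_{n+1})=O(h^{p+1})$, valid for one solution at a time, to the vanishing of the \emph{matrices} $C_\mu(\sigma_n)$, $\widehat C_\mu(\sigma_n)$ themselves. It requires reading ``order $p$'' as a condition on \eqref{VS.SGLMs} for \emph{all} problems \eqref{ODEs}, so that enough derivative patterns at a fixed node are available, and combining this with the linear independence of $1,h,\ldots,h^{p}$. The remaining ingredients---solvability of the implicit stage system for small $h$, and the Lipschitz bounds for $f$ and $g=f'(\cdot)f(\cdot)$ on a compact neighbourhood of the exact solution---are routine.
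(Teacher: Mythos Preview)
The paper does not actually supply a proof of this theorem: the expansions \eqref{Errors}--\eqref{OC.Ch.formula} are presented as the derivation, the sentence immediately preceding the theorem identifies ``order $p$ and stage order $q=p$'' with the residual conditions $d_1(x_{n+1})=O(h^p)$, $d_2(x_{n+1})=O(h^p)$, and the theorem is then read off directly from \eqref{C.Ch}. In effect the paper treats the passage from small local residuals to small one-step errors (and conversely) as understood.

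Your write-up is correct and genuinely adds to the paper's treatment. You make explicit the one-step stability argument that the paper skips: Lipschitz control of $f$ and $g=f'f$ on a compact neighbourhood of the solution, the divided-difference linearisation via the diagonal matrices $\Phi,\Psi$ (valid because $m=1$, as you note), and invertibility of $I-h_nA(\sigma_n)\Phi-h_n^2\overline{A}(\sigma_n)\Psi$ for small $h$, which together convert $h_nd_i=O(h^{p+1})$ into \eqref{VS.stage}--\eqref{VS.out} and back. You also spell out the step the paper leaves entirely implicit, namely that extracting $C_\mu(\sigma_n)=0$ and $\widehat C_\mu(\sigma_n)=0$ from \eqref{C.Ch} requires the order property to hold for \emph{all} admissible right-hand sides in \eqref{ODEs}, so that the jet $(y(x_n),\ldots,y^{(p)}(x_n))$ can be varied; your recursive construction of $f$ from a prescribed jet (needing $y'(x_n)\neq0$) is the standard way to do this for scalar autonomous problems. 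The paper's omission is conventional in the GLM literature (compare the fixed-stepsize conditions \eqref{OC.SGLMs} in \cite{AbdHoj2011}), but your proof is the honest version of what is being claimed.
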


Let us introduce the matrix
$\beta:=[\beta_0\hspace{3mm}\beta_1\hspace{3mm}\cdots\hspace{3mm}\beta_\rho]$
and the matrices $T(\sigma_n)\in \mathbb{R}^{(\rho+1)\times
(p+1)}$, $C(\sigma_n)\in\mathbb{R}^{s\times(p+1)}$, and
$\widehat{T}(\sigma_n)\in \mathbb{R}^{(\rho+1)\times (p+1)}$ as
\begin{equation*}
T(\sigma_n):=
 \left[
 \begin{array}{cccccc}
  1 & 0 & 0 & \cdots & 0 & 0 \\[2mm]
  1 & -\sigma_{n,1} & \frac{\sigma_{n,1}^2}{2!} & \cdots & \frac{(-1)^{p-1}\sigma_{n,1}^{p-1}}{(p-1)!} & \frac{(-1)^{p}\sigma_{n,1}^{p}}{p!}\\[2mm]
  1 & -(\sigma_{n,1}+\sigma_{n,2}) & \frac{(\sigma_{n,1}+\sigma_{n,2})^2}{2!} & \cdots & \frac{(-1)^{p-1}(\sigma_{n,1}+\sigma_{n,2})^{p-1}}{(p-1)!} &
  \frac{(-1)^{p}(\sigma_{n,1}+\sigma_{n,2})^{p}}{p!}\\[2mm]
  \vdots & \vdots & \vdots & \ddots &\vdots & \vdots \\[2mm]
  \displaystyle
  1 &\displaystyle -\sum_{\nu=1}^{\rho}\sigma_{n,\nu} &\displaystyle
  \frac{(\sum_{\nu=1}^{\rho}\sigma_{n,\nu})^2}{2!} &\displaystyle \cdots &
  \displaystyle \frac{(-1)^{p-1}(\sum_{\nu=1}^{\rho}\sigma_{n,\nu})^{p-1}}{(p-1)!}
  &\displaystyle \frac{(-1)^{p}(\sum_{\nu=1}^{\rho}\sigma_{n,\nu})^{p}}{p!}
 \end{array}
 \right],
\end{equation*}
and
\begin{equation*}
C(\sigma_n):=\left[e\hspace{2mm}{c(\sigma_n)}\hspace{2mm}
\frac{c(\sigma_n)^2}{2!}\hspace{2mm} \cdots\hspace{2mm}
\frac{c(\sigma_n)^p}{p!}\right], \qquad \widehat{T}(\sigma_n):=
 \left[
 \begin{array}{c}
  E_1  \\[2mm]
  \mathbf{T}(\sigma_n)
 \end{array}
 \right],
\end{equation*}
where
$E_1=[1\hspace{2mm}1\hspace{2mm}\frac{1}{2!}\hspace{2mm}\cdots\hspace{2mm}
\frac{1}{p!}]$ and $\mathbf{T}(\sigma_n)$ is the matrix obtained
by the $\rho$ first rows of $T(\sigma_n)$. Now, we state
equivalent conditions for the order conditions in the case $p=q$
in the following corollary.

\begin{corollary}
The order conditions for VS SDIMSIM \eqref{VS.SGLMs} of order $p$
and stage order $q=p$ are
\begin{equation}\label{OC.Matrix}
\begin{aligned}
C(\sigma_n)&=A(\sigma_n)C(\sigma_n)K
+\overline{A}(\sigma_n)C(\sigma_n)K^2+U(\sigma_n)\beta T(\sigma_n), \\[2mm]
\beta \widehat{T}(\sigma_n)&=B(\sigma_n)C(\sigma_n)K
+\overline{B}(\sigma_n)C(\sigma_n)K^2 +V(\sigma_n)\beta
T(\sigma_n).
\end{aligned}
\end{equation}
\end{corollary}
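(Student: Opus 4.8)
The plan is to show that \eqref{OC.Matrix} is nothing but a column-by-column repackaging of the scalar/vector conditions $C_{\mu}(\sigma_n)=\widehat{C}_{\mu}(\sigma_n)=0$ for $\mu=0,1,\ldots,p$, which by Theorem~\ref{thm.OC} (under the standing assumption \eqref{VS.input}) are equivalent to the VS SDIMSIM having order $p$ and stage order $q=p$. Thus the argument reduces to matching, for each $\mu$, the $\mu$-th column of both sides of each identity in \eqref{OC.Matrix} against the corresponding $C_{\mu}(\sigma_n)$ in \eqref{OC.C.formula} and $\widehat{C}_{\mu}(\sigma_n)$ in \eqref{OC.Ch.formula}.

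First I would record how $K$ acts by column shifting: writing columns with indices $0,1,\ldots,p$ and using the convention $c(\sigma_n)^0:=e$, the $\mu$-th column of $A(\sigma_n)C(\sigma_n)K$ is $A(\sigma_n)c(\sigma_n)^{\mu-1}/(\mu-1)!$ for $\mu\ge1$ and $0$ for $\mu=0$, while the $\mu$-th column of $\overline{A}(\sigma_n)C(\sigma_n)K^2$ is $\overline{A}(\sigma_n)c(\sigma_n)^{\mu-2}/(\mu-2)!$ for $\mu\ge2$ and $0$ for $\mu\in\{0,1\}$; the same holds with $A,\overline{A}$ replaced by $B,\overline{B}$. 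From the displayed form of $T(\sigma_n)$, its $\mu$-th column has $l$-th entry $\tfrac{(-1)^{\mu}}{\mu!}\bigl(\sum_{\nu=1}^{l}\sigma_{n,\nu}\bigr)^{\mu}$, so the $\mu$-th column of $U(\sigma_n)\beta T(\sigma_n)$ equals $\tfrac{(-1)^{\mu}}{\mu!}U(\sigma_n)\sum_{l=0}^{\rho}\bigl(\sum_{\nu=1}^{l}\sigma_{n,\nu}\bigr)^{\mu}\beta_l$, and likewise for $V(\sigma_n)\beta T(\sigma_n)$. Since $\widehat{T}(\sigma_n)$ consists of the row $E_1$ on top of the first $\rho$ rows of $T(\sigma_n)$, the $\mu$-th column of $\beta\widehat{T}(\sigma_n)$ is $\tfrac{\beta_0}{\mu!}+\tfrac{(-1)^{\mu}}{\mu!}\sum_{l=1}^{\rho}\bigl(\sum_{\nu=1}^{l-1}\sigma_{n,\nu}\bigr)^{\mu}\beta_l$.

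Substituting these column expressions into the two identities of \eqref{OC.Matrix} and reading them off column by column, the first identity becomes exactly $C_{\mu}(\sigma_n)=0$ and the second exactly $\widehat{C}_{\mu}(\sigma_n)=0$ for every $\mu=0,1,\ldots,p$, which completes the proof. The only points requiring care are the low-order columns: for $\mu=0$ the $\overline{A},\overline{B}$-terms are absent and, with the empty-sum convention $\sum_{\nu=1}^{0}\sigma_{n,\nu}=0$, both $\beta T(\sigma_n)$ and $\beta\widehat{T}(\sigma_n)$ contribute $\sum_{l=0}^{\rho}\beta_l$ in that column, reproducing $C_0(\sigma_n)=e-U(\sigma_n)\sum_{l}\beta_l$ and $\widehat{C}_0(\sigma_n)=(I-V(\sigma_n))\sum_{l}\beta_l$; and for $\mu=1$ one must keep the sign $(-1)^{\mu}=-1$ straight. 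The same empty-sum convention also reconciles the apparent mismatch between the summation ranges appearing in \eqref{OC.Ch.formula} (sums starting at $l=2$ or $l=1$) and the full row ranges of $T(\sigma_n)$ and $\widehat{T}(\sigma_n)$: the $l=1$ term of $\sum_l\bigl(\sum_{\nu=1}^{l-1}\sigma_{n,\nu}\bigr)^{\mu}\beta_l$ and the $l=0$ term of $\sum_l\bigl(\sum_{\nu=1}^{l}\sigma_{n,\nu}\bigr)^{\mu}\beta_l$ both vanish for $\mu\ge1$. I do not expect a genuine obstacle here — the content is pure index- and sign-bookkeeping — so the main thing to watch is the persistent off-by-one between the row index of $T(\sigma_n)/\widehat{T}(\sigma_n)$ and the summation index $l$.
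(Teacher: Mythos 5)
Your proposal is correct and follows essentially the same route as the paper: the paper's proof simply collects $C_{\mu}(\sigma_n)$ and $\widehat{C}_{\mu}(\sigma_n)$, $\mu=0,1,\ldots,p$, as columns of two matrices and observes that their vanishing is exactly \eqref{OC.Matrix}, which is the column-by-column identification you carry out in detail. Your explicit bookkeeping of the shift action of $K$, the rows of $T(\sigma_n)$ and $\widehat{T}(\sigma_n)$, and the empty-sum conventions for the low-order columns just makes that one-line argument fully explicit.
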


\begin{proof}
Collect $C_{\mu}(\sigma_n)$ and $\widehat{C}_{\mu}(\sigma_n)$,
$\mu=0,1,\ldots,p$ as the columns of two matrices. Being zero
these matrices is equivalent to \eqref{OC.Matrix}.
$\hfill\blacksquare$
\end{proof}

The local truncation error $lte(x_{n+1})$ of the VS method
\eqref{VS.SGLMs} with $q=p$ at the point $x_{n+1}$ is determined
by the following theorem.
\begin{theorem}
The local truncation error $lte(x_{n+1})$ of the method
\eqref{VS.SGLMs} with $q=p$ at the point $x_{n+1}$ is given by
\[
lte(x_{n+1})=\mathcal{\phi}_p
(\sigma_n)h_n^{p+1}y^{(p+1)}(x_n)+O(h_n^{p+2}),
\]
with
\begin{equation}\label{LTE}
\mathcal{\phi}_p (\sigma_n)=\beta
\widehat{T}_{p+1}(\sigma_n)-B(\sigma_n)C_{p}(\sigma_n)
-\overline{B}(\sigma_n)C_{p-1}(\sigma_n)-V(\sigma_n)\beta
T_{p+1}(\sigma_n),
\end{equation}
in which $C_p(\sigma_n)=\dfrac{c(\sigma_n)^p}{p!}$,
$C_{p-1}(\sigma_n)=\dfrac{c(\sigma_n)^{p-1}}{(p-1)!}$,
\[
T_{p+1}(\sigma_n)=\Bigl[0\hspace{3mm}\frac{(-1)^{p+1}\sigma_{n,1})^{p+1}}{(p+1)!}
\hspace{3mm}\cdots\hspace{3mm}
\frac{(-1)^{p+1}(\sum_{\nu=1}^{\rho}\sigma_{n,\nu})^{p+1}}{(p+1)!}\Bigr]^T,
\]
and
\[
\widehat{T}_{p+1}(\sigma_n)=\Bigl[\frac1{(p+1)!}\hspace{3mm}0\hspace{3mm}\frac{(-1)^{p+1}\sigma_{n,1})^{p+1}}{(p+1)!}
\hspace{3mm}\cdots\hspace{3mm}
\frac{(-1)^{p+1}(\sum_{\nu=1}^{\rho-1}\sigma_{n,\nu})^{p+1}}{(p+1)!}\Bigr]^T.
\]
\end{theorem}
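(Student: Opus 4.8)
The plan is to compute the one–step defect under the \emph{localizing assumption} that the incoming vector is exact, i.e.\ $y^{[n]}=z_2(x_n)$ (rather than merely $y^{[n]}=z_2(x_n)+O(h^{p+1})$ as in \eqref{VS.input}), and to isolate its leading term. First I would substitute $y^{[n]}=z_2(x_n)$ into the stage equation of \eqref{VS.SGLMs} and subtract the first line of \eqref{Approximate}. Writing $\delta:=Y^{[n+1]}-z_1(x_{n+1})$ and using the Lipschitz continuity of $f$ and $g$ together with $f(z_1(x_{n+1}))=y'(x_n+c(\sigma_n)h_n)$ and $g(z_1(x_{n+1}))=y''(x_n+c(\sigma_n)h_n)$, this yields $\delta=-h_nd_1(x_{n+1})+O(h_n\|\delta\|)$. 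Since the method has order and stage order $p$, the order conditions force $C_0(\sigma_n)=\cdots=C_p(\sigma_n)=0$, so by \eqref{C.Ch} the stage defect satisfies $h_nd_1(x_{n+1})=O(h_n^{p+1})$, whence $\delta=O(h_n^{p+1})$.

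Next I would subtract the second line of \eqref{Approximate} from the output equation of \eqref{VS.SGLMs}, still with $y^{[n]}=z_2(x_n)$, obtaining
\[
y^{[n+1]}-z_2(x_{n+1})=h_nB(\sigma_n)\bigl[f(Y^{[n+1]})-f(z_1(x_{n+1}))\bigr]+h_n^2\overline{B}(\sigma_n)\bigl[g(Y^{[n+1]})-g(z_1(x_{n+1}))\bigr]-h_nd_2(x_{n+1}).
\]
By the Lipschitz bounds and $\delta=O(h_n^{p+1})$ the first two terms are $O(h_n^{p+2})$, so $lte(x_{n+1})$, i.e.\ the defect $z_2(x_{n+1})-y^{[n+1]}$ left by one step of the localized scheme, equals $h_nd_2(x_{n+1})$ up to $O(h_n^{p+2})$. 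It then remains to push the expansion \eqref{C.Ch} one order further than written: since the order conditions also give $\widehat{C}_0(\sigma_n)=\cdots=\widehat{C}_p(\sigma_n)=0$, the first surviving term of $h_nd_2(x_{n+1})$ is $\widehat{C}_{p+1}(\sigma_n)h_n^{p+1}y^{(p+1)}(x_n)$, and the theorem reduces to the identity $\phi_p(\sigma_n)=\widehat{C}_{p+1}(\sigma_n)$.

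To establish that identity I would Taylor–expand, to order $p+1$ about $x_n$, the four ingredients of the second line of \eqref{Errors}, exactly as was done to produce \eqref{OC.Ch.formula} but with $\mu=p+1$. Using $x_{n+1-l}-x_n=-h_n\sum_{\nu=1}^{l-1}\sigma_{n,\nu}$, the term $\sum_{l=0}^{\rho}\beta_l y(x_{n+1-l})$ contributes $\beta\,\widehat{T}_{p+1}(\sigma_n)$ at order $h_n^{p+1}y^{(p+1)}(x_n)$, the entries of $\widehat{T}_{p+1}(\sigma_n)$ being $\tfrac1{(p+1)!}$ for $l=0$ and $\tfrac{(-1)^{p+1}}{(p+1)!}\bigl(\sum_{\nu=1}^{l-1}\sigma_{n,\nu}\bigr)^{p+1}$ for $l\ge1$. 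Shifting the Taylor series of $y'(x_n+c(\sigma_n)h_n)$ and $y''(x_n+c(\sigma_n)h_n)$ by one and two indices shows that $h_nB(\sigma_n)y'(\cdot)$ and $h_n^2\overline{B}(\sigma_n)y''(\cdot)$ contribute, at this order, $B(\sigma_n)C_p(\sigma_n)$ with $C_p(\sigma_n)=c(\sigma_n)^p/p!$ and $\overline{B}(\sigma_n)C_{p-1}(\sigma_n)$ with $C_{p-1}(\sigma_n)=c(\sigma_n)^{p-1}/(p-1)!$. Finally, using $x_{n-l}-x_n=-h_n\sum_{\nu=1}^{l}\sigma_{n,\nu}$, the term $V(\sigma_n)\sum_{l=0}^{\rho}\beta_l y(x_{n-l})$ contributes $V(\sigma_n)\beta\,T_{p+1}(\sigma_n)$. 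Collecting the four pieces with the signs carried by the second line of \eqref{Errors} reproduces \eqref{LTE} verbatim.

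The argument is essentially routine once set up this way; the two spots that require care are (i) checking that the implicitly defined stage error $\delta$ enters the output only at order $h_n^{p+2}$ — which is exactly where the stage–order hypothesis $q=p$, via $h_nd_1(x_{n+1})=O(h_n^{p+1})$, is used — and (ii) the bookkeeping in the last step, in particular the asymmetry between $\widehat{T}_{p+1}$, which records the expansion of the values $y(x_{n+1-l})$ (so its $l$-th entry uses $\sum_{\nu=1}^{l-1}\sigma_{n,\nu}$ and it begins with $\tfrac1{(p+1)!},0,\dots$), and $T_{p+1}$, which records the expansion of the values $y(x_{n-l})$ (so its $l$-th entry uses $\sum_{\nu=1}^{l}\sigma_{n,\nu}$ and it begins with $0,\dots$), together with the one– and two–index shifts that convert the $y'$– and $y''$–series into $C_p(\sigma_n)$ and $C_{p-1}(\sigma_n)$.
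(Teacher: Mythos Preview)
Your argument is correct and its computational core --- Taylor–expanding $h_n d_2(x_{n+1})$ about $x_n$, killing the terms of order $\le p$ via the order conditions \eqref{OC.Matrix}, and identifying the surviving $(p{+}1)$-st coefficient with $\phi_p(\sigma_n)$ --- is exactly what the paper does.

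The difference is in the set-up. The paper simply \emph{defines} $lte(x_{n+1}):=h_n d_2(x_{n+1})$ straight from \eqref{Approximate}, i.e.\ it takes the local truncation error to be the residual obtained by inserting the \emph{exact} quantities $z_1(x_{n+1})$ and $z_2(x_n)$ into the output relation, and then expands. You instead interpret $lte$ as the actual one-step output error under the localizing assumption $y^{[n]}=z_2(x_n)$, with the stages $Y^{[n+1]}$ computed by the scheme; this forces you to first bound the stage defect $\delta=Y^{[n+1]}-z_1(x_{n+1})$ and show it feeds into the output only at $O(h_n^{p+2})$. Your extra step is precisely where the hypothesis $q=p$ (hence $h_n d_1(x_{n+1})=O(h_n^{p+1})$) is genuinely used, and it upgrades the paper's statement about the residual into a statement about the true one-step error. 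The paper's route is shorter; yours is more informative about why stage order matters.
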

\proof By the second relation of \eqref{Approximate}, we have
\begin{align*}
lte(x_{n+1})&=h_nd_2
(x_{n+1})\\
&=z_{2}{(x_{n+1})}-h_{n}B(\sigma_{n})f(z_{1}{(x_{n+1})})-
h_{n}^2\overline{B}(\sigma_{n})g(z_{1}{(x_{n+1})})
-V(\sigma_{n})z_{2}{(x_{n})}.
\end{align*}
Using the definitions for $z_1$ and $z_2$, expanding the terms in
the right hand side of the last relation into Taylor series around
$x_n$, we obtain
\begin{align*}
lte(x_{n+1})&=\Bigl(\beta
\widehat{T}(\sigma_n)-B(\sigma_n)C(\sigma_n)K
-\overline{B}(\sigma_n)C(\sigma_n)K^2-V(\sigma_n)\beta
T(\sigma_n)\Bigr)z(x_n,h_n)\\
&\hspace{4mm}+\Bigl(\beta
\widehat{T}_{p+1}(\sigma_n)-B(\sigma_n)C_{p}(\sigma_n)
-\overline{B}(\sigma_n)C_{p-1}(\sigma_n)-V(\sigma_n)\beta
T_{p+1}(\sigma_n)\Bigr)h_n^{p+1}y^{(p+1)}(x_n)\\
&\hspace{4mm}+O(h_n^{p+2}).
\end{align*}
Now, considering the order conditions for the method of order $p$
and stage order $q=p$ in \eqref{OC.Matrix} completes the proof.
$\hfill\blacksquare$
\setcounter{equation}{0} \setcounter{definition}{0}
\setcounter{theorem}{0}
\section{Construction of VS explicit SDIMSIMs}\label{Sec3}
In this section, we are going to construct VS explicit SDIMSIMs
with $p=q=r=s\leq 4$. The coefficients matrices of such methods
are given as a function of the ratios of stepsizes. Throughout the
paper, we consider the abscissa vector $c$ to be values uniformly
in the interval $[0,1]$ so that
$c=[0\hspace{2mm}\frac{1}{s-1}\hspace{2mm}\ldots\hspace{2mm}\frac{s-2}{s-1}\hspace{2mm}1]^T$.
Construction of the VS SDIMSIMs can be done by many approaches.
Here, we assume $\rho=p-1$ and $\beta=I$. It should be noted that
in this way, the input and out put vectors take the form
\begin{align*}
y^{[n]}=\left[%
\begin{array}{c}
  y_n \\
  y_{n-1} \\
  \vdots \\
  y_{n-p+1} \\
\end{array}%
\right],\qquad
y^{[n+1]}=\left[%
\begin{array}{c}
  y_{n+1} \\
  y_{n} \\
  \vdots \\
  y_{n-p+2} \\
\end{array}%
\right].
\end{align*}
After applying the order and stage order conditions
\eqref{OC.Matrix}, we obtain methods with some free parameters
which are chosen in such a way that the underlying fixed stepsize
SDIMSIM has a large stability region with a small error constant
$\mathcal{C}_p$.

\subsection{Methods of order $p=1$}\label{Subsec.3.1}
In this subsection, we derive methods of order one. Since, in this
case $y^{[n]}=y_n$ and $y^{[n+1]}=y_{n+1}$, the coefficients of
the methods are independent of $\sigma_n$. Applying the order and
stage order conditions, the coefficients of the methods take the
form
\begin{align*}
\left[
\begin{array}{c|c|c}
 A & \overline{A} & U\\
 \hline
 &&\\[-4mm]
 B & \overline{B} & V\\
\end{array}
\right]= \left[
\begin{array}{c|c|c}
 0  & 0  &  1\\
 \hline
 &&\\[-4mm]
 1 & \overline{b} & 1\\
\end{array}
\right].
\end{align*}
The stability function of the method is
$R(z)=1+z+\overline{b}z^2$; the condition $\mathcal{C}_1=10^{-3}$
leads to $\overline{b}=\frac{499}{1000}$.
\subsection{Methods of order $p=2$}\label{Subsec.3.2}
In this subsection, we are going to obtain methods of order two
with the abscissa vector $c=[0\hspace{2mm}1]^T$. The coefficients
matrices of these methods take the form
\begin{eqnarray*}
\left[
\begin{array}{c|c|c}
 A(\sigma_{n}) & \overline{A}(\sigma_{n}) & U(\sigma_{n})\\
 \hline
  &&\\[-4mm]
 B(\sigma_{n}) & \overline{B}(\sigma_{n}) & V(\sigma_{n})\\
\end{array}\right]
= \left[
\begin{array}{cc|cc|cc}
 0 & 0 & 0 & 0 & u_{11}(\sigma_{n}) & 1-u_{11}(\sigma_{n})\\
 a_{21} (\sigma_{n})& 0 & \overline{a}_{21}(\sigma_{n}) & 0 & u_{21}(\sigma_{n}) & 1-u_{21}(\sigma_{n})\\
 \hline
 &&&&&\\[-4mm]
 b_{11}(\sigma_{n}) & b_{12} (\sigma_{n})& \overline{b}_{11}(\sigma_{n}) & \overline{b}_{12}(\sigma_{n}) & v_{1}(\sigma_{n}) & 1-v_{1}(\sigma_{n})\\
 b_{21}(\sigma_{n}) & b_{22} (\sigma_{n})& \overline{b}_{21}(\sigma_{n}) & \overline{b}_{22}(\sigma_{n}) & v_{1}(\sigma_{n}) & 1-v_{1}(\sigma_{n})\\
\end{array}
\right],
\end{eqnarray*}
with $\sigma_n=\sigma_{n,1}$. Solving the order and stage order
conditions \eqref{OC.Matrix} leads to
\begin{align*}
a_{21}(\sigma_{n})&=(\sigma_{n}+1-
2\overline{a}_{21}(\sigma_{n}))/\sigma_{n},\\[2mm]
u_{11}(\sigma_{n})&=1,\\[2mm]
u_{21}(\sigma_{n})&=(-1+2\overline{a}_{21}(\sigma_{n})+
\sigma_{n}^2)/\sigma_{n}^2,\\[2mm]
b_{11}(\sigma_{n})&= \frac12+\overline{b}_{11}(\sigma_{n})+
\overline{b}_{12}(\sigma_{n})+\frac12\sigma_{n}^2
-\frac12\sigma_{n}^2v_1(\sigma_{n})+\sigma_{n}-
\sigma_{n}v_1(\sigma_{n}),\\[2mm]
b_{12}(\sigma_{n})&=
\frac12-\overline{b}_{11}(\sigma_{n})-\overline{b}_{12}(\sigma_{n})-
\frac12\sigma_{n}^2+\frac12\sigma_{n}^2v_1(\sigma_{n}),\\[2mm]
b_{21}(\sigma_{n})&=
\overline{b}_{21}(\sigma_{n})+\overline{b}_{22}(\sigma_{n})+
\frac12\sigma_{n}^2-\frac12\sigma_{n}^2v_1(\sigma_{n})+\sigma_{n}-
\sigma_{n}v_1(\sigma_{n}),\\[2mm]
b_{22}(\sigma_{n})&=
-\overline{b}_{21}(\sigma_{n})-\overline{b}_{22}(\sigma_{n})-
\frac12\sigma_{n}^2+\frac12\sigma_{n}^2v_1(\sigma_{n}),
\end{align*}
in which $\overline{a}_{21}(\sigma_{n})$,
$\overline{b}_{11}(\sigma_{n})$, $\overline{b}_{12}(\sigma_{n})$,
$\overline{b}_{21}(\sigma_{n})$, $\overline{b}_{22}(\sigma_{n})$,
and $v_{1}(\sigma_{n})$ are free parameters. Now, defining
\[
M(z;\sigma_n):=V+\bigl(zB+z^2\overline{B}\bigr)(I-zA-z^2\overline{A})^{-1}U,
\]
and
\[
p(w,z;\sigma_n):=\det(wI-M(z;\sigma_n)),
\]
as generalizations of the stability matrix and stability function
of the method \eqref{SGLMs} (cf. \cite{AbdHoj2011a,ButHoj2005}),
for these methods, we have
\[
p(w,z;\sigma_n)=w^2+p_1(z;\sigma_n)w+p_0(z;\sigma_n),
\]
in which $p_1$ and $p_0$ are polynomials in terms of $z$ of degree
four. By requiring $p_0(z;\sigma_n)\equiv0$ which leads to
Runge--Kutta stable methods \cite{AbdHoj2011a,ButHoj2005} in the
case of fixed stepsize and imposing that the underlying fixed
stepsize method has a large stability region with the error
constant $\mathcal{C}_2=10^{-3}$, the coefficients matrices of the
method take the form
\begin{align*}
A(\sigma_n)&=\left[%
\begin{array}{cc}
  0 & 0 \\[2mm]
  1+\dfrac1{5\sigma_n} & 0 \\
\end{array}%
\right],\qquad
\overline{A}(\sigma_n)=\left[%
\begin{array}{cc}
  0 & 0 \\[2mm]
  \dfrac25 & 0 \\
\end{array}%
\right],\qquad
U(\sigma_n)=\left[%
\begin{array}{cc}
  1 & 0 \\[2mm]
  1-\dfrac1{5\sigma_n^2} & \dfrac1{5\sigma_n^2} \\
\end{array}%
\right],\\[3mm]
B(\sigma_n)&=\left[%
\begin{array}{cc}
  \dfrac34+\dfrac{253}{4500}\sigma_n & \dfrac14 \\[3mm]
  -\dfrac14+\dfrac{253}{4500}\sigma_n+\dfrac{253}{900}\sigma_n^2 &
  \dfrac14-\dfrac{253}{900}\sigma_n^2 \\
\end{array}%
\right],\\[3mm]
\overline{B}(\sigma_n)&=\left[%
\begin{array}{cc}
  \dfrac18+\dfrac{253}{6000}\sigma_n^2 &
  \dfrac18-\dfrac{253}{3600}\sigma_n^2 \\[3mm]
  -\dfrac18+\dfrac{3289}{18000}\sigma_n^2 &
  -\dfrac18+\dfrac{253}{3600}\sigma_n^2 \\
\end{array}%
\right],\qquad
V(\sigma_n)=\left[%
\begin{array}{cc}
  \dfrac{4247}{4500} & \dfrac{253}{4500} \\[3mm]
  \dfrac{4247}{4500} & \dfrac{253}{4500} \\
\end{array}%
\right].
\end{align*}
\subsection{Methods of order $p=3$}\label{Subsec.3.3}
In this subsection, we derive methods of order three with the
abscissa vector $c=[0\hspace{2mm}\frac{1}{2}\hspace{2mm}1]^T$. The
coefficients matrices of these methods have the following form
\begin{eqnarray*}
\left[
\begin{array}{ccc|ccc|ccc}
 0 & 0 & 0 & 0 & 0 & 0 & u_{11}(\sigma_{n}) & u_{12}(\sigma_{n}) & 1-u_{11}(\sigma_{n})-u_{12}(\sigma_{n})\\
 a_{21} (\sigma_{n})& 0 & 0 & \overline{a}_{21}(\sigma_{n}) & 0 & 0 & u_{21}(\sigma_{n}) & u_{22}(\sigma_{n}) & 1-u_{21}(\sigma_{n})-u_{22}(\sigma_{n})\\
 a_{31} (\sigma_{n})& a_{32} (\sigma_{n}) & 0 & \overline{a}_{31}(\sigma_{n}) & \overline{a}_{32}(\sigma_{n}) & 0 & u_{31}(\sigma_{n}) & u_{32}(\sigma_{n}) & 1-u_{31}(\sigma_{n})-u_{32}(\sigma_{n})\\
 \hline\\[-3mm]
 b_{11}(\sigma_{n}) & b_{12} (\sigma_{n}) & b_{13} (\sigma_{n}) & \overline{b}_{11}(\sigma_{n}) & \overline{b}_{12}(\sigma_{n}) & \overline{b}_{13}(\sigma_{n}) & v_{1}(\sigma_{n}) & v_{2}(\sigma_{n}) &  1-v_{1}(\sigma_{n})-v_{2}(\sigma_{n})\\
 b_{21}(\sigma_{n}) & b_{22} (\sigma_{n}) & b_{23} (\sigma_{n}) & \overline{b}_{21}(\sigma_{n}) & \overline{b}_{22}(\sigma_{n}) & \overline{b}_{23}(\sigma_{n}) & v_{1}(\sigma_{n}) & v_{2}(\sigma_{n}) &  1-v_{1}(\sigma_{n})-v_{2}(\sigma_{n})\\
 b_{31}(\sigma_{n}) & b_{32} (\sigma_{n}) & b_{33} (\sigma_{n}) & \overline{b}_{31}(\sigma_{n}) & \overline{b}_{32}(\sigma_{n}) & \overline{b}_{33}(\sigma_{n}) & v_{1}(\sigma_{n}) & v_{2}(\sigma_{n}) &  1-v_{1}(\sigma_{n})-v_{2}(\sigma_{n})\\
\end{array}
\right],
\end{eqnarray*}
with $\sigma_n=[\sigma_{n,1}\hspace{3mm}\sigma_{n,2}]^T$. To
construct such methods, after applying the order and stage order
conditions  \eqref{OC.Matrix} for entries of the matrices
$U(\sigma_n)$, $B(\sigma_n)$ and the first column of the matrix
$A(\sigma_n)$, the remain parameters are chosen in such a way that
the underlying fixed stepsize method has a large stability region
with `nice' coefficients and the error constant
$\mathcal{C}_3=10^{-3}$. The coefficients matrices of the method
take the form
\begin{align*}
A(\sigma_n)&=
\left[%
\begin{array}{ccc}
  0 & 0 & 0 \\[2mm]
  \frac{5+2\sigma_{n,2}+20\sigma_{n,1}\sigma_{n,2}+20\sigma_{n,1}^2+4\sigma_{n,1}}{40\sigma_{n,1}(\sigma_{n,1}+\sigma_{n,2})} & 0 & 0 \\[3mm]
  \frac{-55-52\sigma_{n,2}+60\sigma_{n,1}\sigma_{n,2}+60\sigma_{n,1}^2-104\sigma_{n,1}}{80\sigma_{n,1}(\sigma_{n,1}+\sigma_{n,2})} & \frac14 & 0 \\
\end{array}%
\right],\quad \overline{A}(\sigma_n)=
\left[%
\begin{array}{ccc}
  0 & 0 & 0 \\[2mm]
  \frac1{10} & 0 & 0 \\[2mm]
  \frac15 & \frac12 & 0 \\
\end{array}%
\right],\\[3mm]
\overline{B}(\sigma_n)&=\left[%
\begin{array}{ccc}
  \frac{67}{500} & 0 & \frac{13}{500} \\[2mm]
  0 & -\frac{171}{500} & 0 \\[2mm]
  -\frac{321}{100} & 0 & -\frac{73}{100} \\
\end{array}%
\right],\qquad
V(\sigma_n)=\left[%
\begin{array}{ccc}
  0 & \frac{12072}{9889} & -\frac{2183}{9889} \\[2mm]
  0 & \frac{12072}{9889} & -\frac{2183}{9889} \\[2mm]
  0 & \frac{12072}{9889} & -\frac{2183}{9889} \\
\end{array}%
\right],\\[3mm]
U(\sigma_n)&=\left[%
\begin{array}{ccc}
  1 & 0 & 0 \\[2mm]
  u_{21}(\sigma_n) &
  \frac{5+2(\sigma_{n,1}+\sigma_{n,2})}{40\sigma_{n,1}^2\sigma_{n,2}} &
  -\frac{5+2\sigma_{n,1}}{40\sigma_{n,2}(\sigma_{n,1}+\sigma_{n,2})^2}
  \\[3mm]
  u_{31}(\sigma_n)&
  -\frac{55+52(\sigma_{n,1}+\sigma_{n,2})}{80\sigma_{n,1}^2\sigma_{n,2}} &
  \frac{55+52\sigma_{n,1}}{80\sigma_{n,2}(\sigma_{n,1}+\sigma_{n,2})^2}
  \\
\end{array}%
\right],
\end{align*}
with
\begin{align*}
u_{21}(\sigma_n)&=\frac{-6\sigma_{n,1}\sigma_{n,2}-5\sigma_{n,2}-10\sigma_{n,1}-6\sigma_{n,1}^2-2\sigma_{n,2}^2+40\sigma_{n,1}^2\sigma_{n,2}^2+80\sigma_{n,1}^3\sigma_{n,2}+40\sigma_{n,1}^4}{40\sigma_{n,1}^2(\sigma_{n,1}+\sigma_{n,2})^2},\\[2mm]
u_{31}(\sigma_n)&=\frac{156\sigma_{n,1}\sigma_{n,2}+55\sigma_{n,2}+110\sigma_{n,1}+156\sigma_{n,1}^2+52\sigma_{n,2}^2+80\sigma_{n,1}^2\sigma_{n,2}^2+160\sigma_{n,1}^3\sigma_{n,2}+80\sigma_{n,1}^4}{80\sigma_{n,1}^2(\sigma_{n,1}+\sigma_{n,2})^2},
\end{align*}
and the elements of the matrix $B$ are given by
\begin{align*}
b_{11}(\sigma_n)&=\sigma_{n,1}-{\frac
{2183}{9889}}\,\sigma_{n,2}+\frac32\,{\sigma_{n,1}}^{2
}+\frac23\,{\sigma_{n,1}}^{3}-{\frac
{6549}{9889}}\,\sigma_{n,1}\sigma_{{2 }}+{\frac {407}{750}}-{\frac
{4366}{29667}}\,{\sigma_{n,2}}^{3}\\[2mm]
&\hspace{4mm}-{ \frac
{4366}{9889}}\,\sigma_{n,1}{\sigma_{n,2}}^{2}-{\frac {4366}{9889
}}\,{\sigma_{n,1}}^{2}\sigma_{n,2}-{\frac
{6549}{19778}}\,{\sigma_{n,2}}^{2},
\\[2mm]
b_{12}(\sigma_n)&={\frac {88}{375}}+{\frac
{8732}{9889}}\,\sigma_{n,1}{\sigma_{n,2}}^{2} +{\frac
{8732}{9889}}\,\sigma_{n,1}\sigma_{n,2}+{\frac {4366}{9889}}\,
{\sigma_{n,2}}^{2}-\frac43\,{\sigma_{n,1}}^{3}-2\,{\sigma_{n,1}}^{2}\\[2mm]
&\hspace{4mm}+{ \frac
{8732}{9889}}\,{\sigma_{n,1}}^{2}\sigma_{n,2}+{\frac {8732}{
29667}}\,{\sigma_{n,2}}^{3},
\\[2mm]
b_{13}(\sigma_n)&={\frac {167}{750}}-{\frac
{4366}{9889}}\,\sigma_{n,1}{\sigma_{n,2}}^{2 }-{\frac
{2183}{9889}}\,\sigma_{n,1}\sigma_{n,2}-{\frac {2183}{19778}}
\,{\sigma_{n,2}}^{2}+\frac23\,{\sigma_{n,1}}^{3}
+\frac12\,{\sigma_{n,1}}^{2}\\[2mm]
&\hspace{4mm}-{ \frac
{4366}{9889}}\,{\sigma_{n,1}}^{2}\sigma_{n,2}-{\frac {4366}{
29667}}\,{\sigma_{n,2}}^{3},
\\[2mm]
b_{21}(\sigma_n)&=-{\frac {171}{500}}+\sigma_{n,1}-{\frac
{2183}{9889}}\,\sigma_{n,2}+
\frac32\,{\sigma_{n,1}}^{2}+\frac23\,{\sigma_{n,1}}^{3}-{\frac
{6549}{9889}}\, \sigma_{n,1}\sigma_{n,2}-{\frac
{4366}{29667}}\,{\sigma_{n,2}}^{3}\\[2mm]
&\hspace{4mm}-{ \frac
{4366}{9889}}\,\sigma_{n,1}{\sigma_{n,2}}^{2}-{\frac {4366}{9889
}}\,{\sigma_{n,1}}^{2}\sigma_{n,2}-{\frac
{6549}{19778}}\,{\sigma_{n,2}}^{2},
\\[2mm]
b_{22}(\sigma_n)&={\frac
{8732}{9889}}\,\sigma_{n,1}{\sigma_{n,2}}^{2}+{\frac {8732}{
9889}}\,\sigma_{n,1}\sigma_{n,2}+{\frac
{4366}{9889}}\,{\sigma_{n,2}}^
{2}-\frac43\,{\sigma_{n,1}}^{3}-2\,{\sigma_{n,1}}^{2}\\[2mm]
&\hspace{4mm}+{\frac {8732}{9889}}
\,{\sigma_{n,1}}^{2}\sigma_{n,2}+{\frac
{8732}{29667}}\,{\sigma_{n,2}} ^{3},
\\[2mm]
b_{23}(\sigma_n)&={\frac {171}{500}}-{\frac
{4366}{9889}}\,\sigma_{n,1}{\sigma_{n,2}}^{2 }-{\frac
{2183}{9889}}\,\sigma_{n,1}\sigma_{n,2}-{\frac {2183}{19778}}
\,{\sigma_{n,2}}^{2}+\frac23\,{\sigma_{n,1}}^{3}
+\frac12\,{\sigma_{n,1}}^{2}\\[2mm]
&\hspace{4mm}-{ \frac
{4366}{9889}}\,{\sigma_{n,1}}^{2}\sigma_{n,2}-{\frac {4366}{
29667}}\,{\sigma_{n,2}}^{3},
\\[2mm]
b_{31}(\sigma_n)&=-{\frac {89}{10}}-{\frac
{4366}{9889}}\,\sigma_{n,1}{\sigma_{n,2}}^{2} -{\frac
{6549}{9889}}\,\sigma_{n,1}\sigma_{n,2}-{\frac {2183}{9889}}\,
\sigma_{n,2}-{\frac {6549}{19778}}\,{\sigma_{n,2}}^{2}\\[2mm]
&\hspace{4mm}-{\frac {4366}{
9889}}\,{\sigma_{n,1}}^{2}\sigma_{n,2}-{\frac
{4366}{29667}}\,{\sigma_ {{2}}}^{3},
\\[2mm]
b_{32}(\sigma_n)&={\frac {248}{25}}+{\frac
{8732}{9889}}\,\sigma_{n,1}{\sigma_{n,2}}^{2} +{\frac
{8732}{9889}}\,\sigma_{n,1}\sigma_{n,2}+{\frac {4366}{9889}}\,
{\sigma_{n,2}}^{2}+{\frac
{8732}{9889}}\,{\sigma_{n,1}}^{2}\sigma_{n,2}+{\frac
{8732}{29667}}\,{\sigma_{n,2}}^{3},
\\[2mm]
b_{33}(\sigma_n)&=-{\frac {51}{50}}-{\frac
{2183}{9889}}\,\sigma_{n,1}\sigma_{n,2}-{ \frac
{2183}{19778}}\,{\sigma_{n,2}}^{2}-{\frac {4366}{9889}}\,{\sigma
_{{1}}}^{2}\sigma_{n,2}-{\frac
{4366}{9889}}\,\sigma_{n,1}{\sigma_{n,2}}^{2}-{\frac
{4366}{29667}}\,{\sigma_{n,2}}^{3}.
\\[2mm]
\end{align*}
\subsection{Methods of order $p=4$}\label{Subsec.3.4}
In this subsection, we search for explicit methods of order four
with the abscissa vector
$c=[0\hspace{2mm}\frac{1}{3}\hspace{2mm}\frac{2}{3}\hspace{2mm}1]^T$
in which the matrices $A(\sigma_{n})$ and
$\overline{A}(\sigma_{n})$ are strictly lower triangular matrices,
the matrix $U$ has the rows
\[
u_i(\sigma_{n})=\Bigl[u_{11}(\sigma_{n})\hspace{3mm}u_{12}(\sigma_{n})
\hspace{3mm}u_{13}(\sigma_{n})\hspace{3mm}1-\sum_{j=1}^{3}u_{ij}(\sigma_n)\Bigr],
\quad
i=1,2,3,4,
\]
and the matrix $V$ is a rank-one matrix in the form
$V(\sigma_{n})=e\,v(\sigma_{n})^T$ where
$v(\sigma_{n})=[v_1(\sigma_{n})\hspace{2mm}v_2(\sigma_{n})\hspace{2mm}\ldots\hspace{2mm}v_r(\sigma_{n})]^T$,
and $v(\sigma_{n})^Te=1$ with with
$\sigma_n=[\sigma_{n,1}\hspace{3mm}\sigma_{n,2}\hspace{3mm}\sigma_{n,3}]^T$.
By applying the order and stage order conditions \eqref{OC.Matrix}
for entries of the matrices $U(\sigma_n)$, $B(\sigma_n)$ and the
first column of the matrix $A(\sigma_n)$, the remain parameters
are chosen so that the underlying fixed stepsize method has a
large stability region with `nice' coefficients and the error
constant $\mathcal{C}_3\approx2\times10^{-3}$. The coefficients
matrices of the method take the form
\begin{align*}
A(\sigma_n)&=\left[%
\begin{array}{cccc}
  0 & 0 & 0 & 0 \\[2mm]
  a_{21}(\sigma_n) & 0 & 0 & 0 \\[2mm]
  a_{31}(\sigma_n) & -\frac{11}{25} & 0 & 0 \\[2mm]
  a_{41}(\sigma_n) & \frac{11}{10} & -\frac{16}{25} & 0 \\
\end{array}%
\right],\qquad
\overline{A}(\sigma_n)&=\left[%
\begin{array}{cccc}
  0 & 0 & 0 & 0 \\[2mm]
  \frac12 & 0 & 0 & 0 \\[2mm]
  1 & \frac14 & 0 & 0 \\[2mm]
  \frac{351}{125} & 0 & \frac{42}{125} & 0 \\
\end{array}%
\right],
\end{align*}
with {\footnotesize
\begin{align*} a_{21}(\sigma_n)&={\frac
{1}{81D(\sigma_n)}}\,\Bigl(-72\,{\sigma_{n,2}}^{2}+27\,\sigma_{n,1}{
\sigma_{n,2}}^{2}-72\,\sigma_{n,2}\sigma_{n,3}+6\,\sigma_{n,2}+54\,{
\sigma_{n,1}}^{2}\sigma_{n,2}+27\,\sigma_{n,1}\sigma_{n,2}\sigma_{n,3}
\\[1mm]
&\hspace{4mm}-288\,\sigma_{n,1}\sigma_{n,2}-216\,{\sigma_{n,1}}^{2}
+27\,{\sigma_{{n,1
}}}^{3}+3\,\sigma_{n,3}-144\,\sigma_{n,1}\sigma_{n,3}+27\,{\sigma_{n,1
}}^{2}\sigma_{n,3}+9\,\sigma_{n,1}+1\Bigr),
\\[1mm]
a_{31}(\sigma_n)&={\frac
{1}{4050D(\sigma_n)}}\,\Bigl(-7137\,{\sigma_{n,2}}^{2}+4482\,\sigma_{n,1}{\sigma_{n,2}}^{2}-7137\,
\sigma_{n,2}\sigma_{n,3}-462\,\sigma_{n,2}+8964\,{\sigma_{n,1}}^{2}
\sigma_{n,2}\\[1mm]
&\hspace{4mm}+4482\,\sigma_{n,1}\sigma_{n,2}\sigma_{n,3}-28548\,\sigma_
{{1}}\sigma_{n,2}-21411\,{\sigma_{n,1}}^{2}+4482\,{\sigma_{n,1}}^{3}-
231\,\sigma_{n,3}-14274\,\sigma_{n,1}\sigma_{n,3}\\[1mm]
&\hspace{4mm}+4482\,{\sigma_{n,1}}
^{2}\sigma_{n,3}-693\,\sigma_{n,1}-286
\Bigr)\\[1mm]
a_{41}(\sigma_n)&={\frac
{1}{2250D(\sigma_n)}}\,\Bigl(1215\,\sigma_{n,1}{\sigma_{n,2}}^{2}-11628\,{\sigma_{n,2}}^{2}+1215\,
\sigma_{n,1}\sigma_{n,2}\sigma_{n,3}+642\,\sigma_{n,2}+2430\,{\sigma_{
{1}}}^{2}\sigma_{n,2}\\[1mm]
&\hspace{4mm}-11628\,\sigma_{n,2}\sigma_{n,3}-46512\,\sigma_{{
1}}\sigma_{n,2}-34884\,{\sigma_{n,1}}^{2}+1215\,{\sigma_{n,1}}^{3}+321
\,\sigma_{n,3}+1215\,{\sigma_{n,1}}^{2}\sigma_{n,3}\\[1mm]
&\hspace{4mm}-23256\,\sigma_{n,1
}\sigma_{n,3}+963\,\sigma_{n,1}-442 \Bigr),
\end{align*}}
in which $D(\sigma_n)$ stands for $\sigma_{n,1}(\sigma_{n,1}
+\sigma_{n,2}) (\sigma_{n,1}+\sigma_{n,2}+\sigma_{n,3})$, and
\begin{align*}
V(\sigma_n)&=\left[ \begin {array}{cccc} \frac12&\frac14&{\frac
{8}{25}}&-{\frac {7}{100}}
\\\noalign{\medskip}\frac12&\frac14&{\frac {8}{25}}&-{\frac {7}{100}}
\\\noalign{\medskip}\frac12&\frac14&{\frac {8}{25}}&-{\frac {7}{100}}
\\\noalign{\medskip}\frac12&\frac14&{\frac {8}{25}}&-{\frac {7}{100}}
\end {array} \right],
\qquad \overline{B}(\sigma_n)=\left[ \begin {array}{cccc} {\frac
{6211}{25000}}&{\frac {2}{25}}&-{ \frac
{147}{6250}}&0\\\noalign{\medskip}{\frac {6211}{25000}}&{\frac {
2}{25}}&-{\frac {147}{6250}}&0\\\noalign{\medskip}{\frac
{6211}{25000} }&{\frac {2}{25}}&-{\frac
{147}{6250}}&0\\\noalign{\medskip}{\frac { 6211}{25000}}&{\frac
{2}{25}}&-{\frac {147}{6250}}&0\end {array}
 \right]
\end{align*}
and {\footnotesize
\begin{align*}
u_{1}(\sigma_n)&=\bigl[\,1\hspace{3mm}0\hspace{3mm}0\hspace{3mm}0\,\bigr],\\[2mm]
u_{21}(\sigma_n)&=\frac1{81D(\sigma_n)^2}\Bigl(-3\,{\sigma_{n,1}}^{2}-24\,{\sigma_{n,1}}^{3}+432\,{\sigma_{n,1}}^{4}-
2\,\sigma_{n,1}\sigma_{n,3}-24\,{\sigma_{n,1}}^{2}\sigma_{n,3}-30\,
\sigma_{n,1}\sigma_{n,2}\sigma_{n,3}+216\,\sigma_{n,1}\sigma_{n,2}{
\sigma_{n,3}}^{2}\\[1mm]
&\hspace{4mm}+1080\,{\sigma_{n,1}}^{2}\sigma_{n,2}\sigma_{n,3}+81
\,{\sigma_{n,2}}^{2}{\sigma_{n,3}}^{2}{\sigma_{n,1}}^{2}+162\,{\sigma_
{{2}}}^{3}\sigma_{n,3}{\sigma_{n,1}}^{2}+162\,\sigma_{n,2}{\sigma_{n,3
}}^{2}{\sigma_{n,1}}^{3}+486\,{\sigma_{n,1}}^{3}{\sigma_{n,2}}^{2}
\sigma_{n,3}\\[1mm]
&\hspace{4mm}+486\,{\sigma_{n,1}}^{4}\sigma_{n,2}\sigma_{n,3}+648\,
\sigma_{n,1}{\sigma_{n,2}}^{2}\sigma_{n,3}-30\,\sigma_{n,1}{\sigma_{{n,2
}}}^{2}-{\sigma_{n,2}}^{2}-\sigma_{n,2}\sigma_{n,3}-48\,{\sigma_{n,1}}
^{2}\sigma_{n,2}-4\,\sigma_{n,1}\sigma_{n,2}-6\,{\sigma_{n,2}}^{3}\\[1mm]
&\hspace{4mm}+72
\,{\sigma_{n,2}}^{4}+81\,{\sigma_{n,1}}^{6}+1152\,{\sigma_{n,1}}^{3}
\sigma_{n,2}+1080\,{\sigma_{n,1}}^{2}{\sigma_{n,2}}^{2}+432\,\sigma_{{
1}}{\sigma_{n,2}}^{3}+576\,{\sigma_{n,1}}^{3}\sigma_{n,3}+216\,{\sigma
_{{1}}}^{2}{\sigma_{n,3}}^{2}\\[1mm]
&\hspace{4mm}+72\,{\sigma_{n,2}}^{2}{\sigma_{n,3}}^{2}
+144\,{\sigma_{n,2}}^{3}\sigma_{n,3}+486\,{\sigma_{n,1}}^{4}{\sigma_{{
2}}}^{2}+162\,{\sigma_{n,1}}^{5}\sigma_{n,3}+81\,{\sigma_{n,1}}^{4}{
\sigma_{n,3}}^{2}+324\,{\sigma_{n,1}}^{5}\sigma_{n,2}+81\,{\sigma_{n,2
}}^{4}{\sigma_{n,1}}^{2}\\[1mm]
&\hspace{4mm}+324\,{\sigma_{n,1}}^{3}{\sigma_{n,2}}^{3}-9\,
{\sigma_{n,2}}^{2}\sigma_{n,3}-3\,\sigma_{n,2}{\sigma_{n,3}}^{2}-6\,
\sigma_{n,1}{\sigma_{n,3}}^{2}
\Bigr),\\[1mm]
u_{22}(\sigma_n)&=\frac{-1}{81(\sigma_{n,2}+
\sigma_{n,3})\sigma_{n,1}^2\sigma_{n,2}}\Bigl(72\,\sigma_{n,2}\sigma_{n,3}
-3\,\sigma_{n,3}+72\,\sigma_{n,1}\sigma_{
{3}}+72\,{\sigma_{n,2}}^{2}-6\,\sigma_{n,2}+144\,\sigma_{n,1}\sigma_{{
2}}-6\,\sigma_{n,1}-1+72\,{\sigma_{n,1}}^{2}
\Bigr),\\[1mm]
u_{23}(\sigma_n)&=\frac1{81(\sigma_{n,1}+\sigma_{n,2})^2\sigma_{n,2}\sigma_{n,3}}\Bigl(-3\,\sigma_{n,3}+72\,\sigma_{n,1}\sigma_{n,3}-1+72\,{\sigma_{n,1}}^{2}
-3\,\sigma_{n,2}-6\,\sigma_{n,1}+72\,\sigma_{n,1}\sigma_{n,2}
\Bigr),\\[1mm]
u_{24}(\sigma_n)&=\frac{-1}{81\sigma_{n,3}(\sigma_{n,2}
+\sigma_{n,3})
(\sigma_{n,1}+\sigma_{n,2}+\sigma_{n,3})^2}\Bigl(72\,{\sigma_{n,1}}^{2}-6\,\sigma_{n,1}+72\,\sigma_{n,1}\sigma_{n,2}-1-
3\,\sigma_{n,2} \Bigr),
\\[1mm]
u_{31}(\sigma_n)&=\frac1{4050D(\sigma_n)^2}\Bigl(2310\,\sigma_{n,1}\sigma_{n,2}\sigma_{n,3}+8100\,{\sigma_{n,1}}^{2}{
\sigma_{n,2}}^{3}\sigma_{n,3}+4050\,{\sigma_{n,1}}^{2}{\sigma_{n,3}}^{
2}{\sigma_{n,2}}^{2}+107055\,{\sigma_{n,1}}^{2}\sigma_{n,2}\sigma_{n,3
}+2310\,\sigma_{n,1}{\sigma_{n,2}}^{2}\\[1mm]
&\hspace{4mm}+286\,\sigma_{n,2}\sigma_{n,3}+
1144\,\sigma_{n,1}\sigma_{n,2}+3696\,{\sigma_{n,1}}^{2}\sigma_{n,2}+
572\,\sigma_{n,1}\sigma_{n,3}+1848\,{\sigma_{n,1}}^{2}\sigma_{n,3}+231
\,\sigma_{n,2}{\sigma_{n,3}}^{2}+693\,{\sigma_{n,2}}^{2}\sigma_{n,3}\\[1mm]
&\hspace{4mm}+
462\,\sigma_{n,1}{\sigma_{n,3}}^{2}+114192\,{\sigma_{n,1}}^{3}\sigma_{
{2}}+107055\,{\sigma_{n,1}}^{2}{\sigma_{n,2}}^{2}+57096\,{\sigma_{n,1}
}^{3}\sigma_{n,3}+21411\,{\sigma_{n,1}}^{2}{\sigma_{n,3}}^{2}+14274\,{
\sigma_{n,2}}^{3}\sigma_{n,3}\\[2mm]
&\hspace{4mm}+7137\,{\sigma_{n,2}}^{2}{\sigma_{n,3}}^{
2}+42822\,\sigma_{n,1}{\sigma_{n,2}}^{3}+4050\,{\sigma_{n,1}}^{4}{
\sigma_{n,3}}^{2}+4050\,{\sigma_{n,1}}^{2}{\sigma_{n,2}}^{4}+16200\,{
\sigma_{n,1}}^{3}{\sigma_{n,2}}^{3}+24300\,{\sigma_{n,1}}^{4}{\sigma_{
{2}}}^{2}\\[1mm]
&\hspace{4mm}+8100\,{\sigma_{n,1}}^{5}\sigma_{n,3}+16200\,{\sigma_{n,1}}^{
5}\sigma_{n,2}+21411\,\sigma_{n,1}\sigma_{n,2}{\sigma_{n,3}}^{2}+24300
\,{\sigma_{n,1}}^{3}{\sigma_{n,2}}^{2}\sigma_{n,3}+858\,{\sigma_{n,1}}
^{2}+1848\,{\sigma_{n,1}}^{3}+42822\,{\sigma_{n,1}}^{4}\\[1mm]
&\hspace{4mm}+24300\,{\sigma
_{{1}}}^{4}\sigma_{n,2}\sigma_{n,3}+64233\,\sigma_{n,1}{\sigma_{n,2}}^
{2}\sigma_{n,3}+8100\,{\sigma_{n,1}}^{3}{\sigma_{n,3}}^{2}\sigma_{n,2}
+286\,{\sigma_{n,2}}^{2}+462\,{\sigma_{n,2}}^{3}+7137\,{\sigma_{n,2}}^
{4}+4050\,{\sigma_{n,1}}^{6}
\Bigr),\\[2mm]
u_{32}(\sigma_n)&=\frac{-1}{4050(\sigma_{n,2}+
\sigma_{n,3})\sigma_{n,1}^2\sigma_{n,2}}\Bigl(7137\,\sigma_{n,2}\sigma_{n,3}+231\,\sigma_{n,3}+7137\,\sigma_{n,1}
\sigma_{n,3}+7137\,{\sigma_{n,2}}^{2}+462\,\sigma_{n,2}+14274\,\sigma_
{{1}}\sigma_{n,2}+286\\[1mm]
&\hspace{4mm}+7137\,{\sigma_{n,1}}^{2}+462\,\sigma_{n,1}
\Bigr),\\[2mm]
u_{33}(\sigma_n)&=\frac1{4050(\sigma_{n,1}+\sigma_{n,2})^2\sigma_{n,2}\sigma_{n,3}}\Bigl(231\,\sigma_{n,3}+7137\,\sigma_{n,1}\sigma_{n,3}+7137\,\sigma_{n,1}
\sigma_{n,2}+231\,\sigma_{n,2}+462\,\sigma_{n,1}+286+7137\,{\sigma_{{n,1
}}}^{2}
\Bigr),\\[2mm]
u_{34}(\sigma_n)&=\frac{-1}{4050\sigma_{n,3}(\sigma_{n,2}
+\sigma_{n,3})
(\sigma_{n,1}+\sigma_{n,2}+\sigma_{n,3})^2}\Bigl(7137\,\sigma_{n,1}\sigma_{n,2}+231\,\sigma_{n,2}+462\,\sigma_{n,1}+286
+7137\,{\sigma_{n,1}}^{2}
\Bigr),\\[2mm]
u_{41}(\sigma_n)&=\frac1{2250D(\sigma_n)^2}\Bigl(-3210\,\sigma_{n,1}\sigma_{n,2}\sigma_{n,3}+4500\,{\sigma_{n,1}}^{2}{
\sigma_{n,2}}^{3}\sigma_{n,3}+2250\,{\sigma_{n,1}}^{2}{\sigma_{n,3}}^{
2}{\sigma_{n,2}}^{2}+174420\,{\sigma_{n,1}}^{2}\sigma_{n,2}\sigma_{n,3
}-3210\,\sigma_{n,1}{\sigma_{n,2}}^{2}\\[1mm]
&\hspace{4mm}+442\,\sigma_{n,2}\sigma_{n,3}+
1768\,\sigma_{n,1}\sigma_{n,2}-5136\,{\sigma_{n,1}}^{2}\sigma_{n,2}+
884\,\sigma_{n,1}\sigma_{n,3}-2568\,{\sigma_{n,1}}^{2}\sigma_{n,3}-321
\,\sigma_{n,2}{\sigma_{n,3}}^{2}-963\,{\sigma_{n,2}}^{2}\sigma_{n,3}\\[1mm]
&\hspace{4mm}-
642\,\sigma_{n,1}{\sigma_{n,3}}^{2}+186048\,{\sigma_{n,1}}^{3}\sigma_{
{2}}+174420\,{\sigma_{n,1}}^{2}{\sigma_{n,2}}^{2}+93024\,{\sigma_{n,1}
}^{3}\sigma_{n,3}+34884\,{\sigma_{n,1}}^{2}{\sigma_{n,3}}^{2}+23256\,{
\sigma_{n,2}}^{3}\sigma_{n,3}\\[1mm]
&\hspace{4mm}+11628\,{\sigma_{n,2}}^{2}{\sigma_{n,3}}^
{2}+69768\,\sigma_{n,1}{\sigma_{n,2}}^{3}+2250\,{\sigma_{n,1}}^{4}{
\sigma_{n,3}}^{2}+2250\,{\sigma_{n,1}}^{2}{\sigma_{n,2}}^{4}+9000\,{
\sigma_{n,1}}^{3}{\sigma_{n,2}}^{3}+13500\,{\sigma_{n,1}}^{4}{\sigma_{
{2}}}^{2}\\[2mm]
&\hspace{4mm}+4500\,{\sigma_{n,1}}^{5}\sigma_{n,3}+9000\,{\sigma_{n,1}}^{5
}\sigma_{n,2}+34884\,\sigma_{n,1}\sigma_{n,2}{\sigma_{n,3}}^{2}+13500
\,{\sigma_{n,1}}^{3}{\sigma_{n,2}}^{2}\sigma_{n,3}+1326\,{\sigma_{n,1}
}^{2}-2568\,{\sigma_{n,1}}^{3}+69768\,{\sigma_{n,1}}^{4}\\[1mm]
&\hspace{4mm}+13500\,{
\sigma_{n,1}}^{4}\sigma_{n,2}\sigma_{n,3}+104652\,\sigma_{n,1}{\sigma_
{{2}}}^{2}\sigma_{n,3}+4500\,{\sigma_{n,1}}^{3}{\sigma_{n,3}}^{2}
\sigma_{n,2}+442\,{\sigma_{n,2}}^{2}-642\,{\sigma_{n,2}}^{3}+11628\,{
\sigma_{n,2}}^{4}+2250\,{\sigma_{n,1}}^{6} \Bigr),\end{align*}}
{\footnotesize
\begin{align*}
u_{42}(\sigma_n)&=\frac{-1}{2250(\sigma_{n,2}+
\sigma_{n,3})\sigma_{n,1}^2\sigma_{n,2}}\Bigl(11628\,\sigma_{n,2}\sigma_{n,3}-321\,\sigma_{n,3}+11628\,\sigma_{n,1}
\sigma_{n,3}+11628\,{\sigma_{n,2}}^{2}-642\,\sigma_{n,2}+23256\,\sigma
_{{1}}\sigma_{n,2}\\[1mm]
&\hspace{4mm}-642\,\sigma_{n,1}+11628\,{\sigma_{n,1}}^{2}+442
\Bigr),\\[1mm]
u_{43}(\sigma_n)&=\frac1{2250(\sigma_{n,1}+\sigma_{n,2})^2\sigma_{n,2}\sigma_{n,3}}\Bigl(-321\,\sigma_{n,3}+11628\,\sigma_{n,1}\sigma_{n,3}+11628\,\sigma_{n,1}
\sigma_{n,2}-321\,\sigma_{n,2}-642\,\sigma_{n,1}+442+11628\,{\sigma_{{
1}}}^{2}
\Bigr),\\[1mm]
u_{44}(\sigma_n)&=\frac{-1}{2250\sigma_{n,3}(\sigma_{n,2}
+\sigma_{n,3})
(\sigma_{n,1}+\sigma_{n,2}+\sigma_{n,3})^2}\Bigl(11628\,\sigma_{n,1}\sigma_{n,2}-321\,\sigma_{n,2}-642\,\sigma_{n,1}+
442+11628\,{\sigma_{n,1}}^{2} \Bigr),\\[1mm]
b_{11}(\sigma_n)&=-{\frac
{63}{50}}\,\sigma_{n,1}\sigma_{n,2}\sigma_{n,3}-{\frac {189}{
200}}\,{\sigma_{n,1}}^{2}\sigma_{n,2}\sigma_{n,3}+\frac94\,\sigma_{n,1}{
\sigma_{n,2}}^{2}{\frac
{77}{200}}\,\sigma_{n,2}\sigma_{n,3}+{\frac {
11}{8}}\,\sigma_{n,1}\sigma_{n,2}+\frac94\,{\sigma_{n,1}}^{2}\sigma_{n,2}-
{\frac {77}{200}}\,\sigma_{n,1}\sigma_{n,3}\\[1mm]
&\hspace{4mm}-{\frac {63}{100}}\,{\sigma
_{{1}}}^{2}\sigma_{n,3}-{\frac
{63}{100}}\,\sigma_{n,2}{\sigma_{n,3}}^ {2}-{\frac
{63}{100}}\,{\sigma_{n,2}}^{2}\sigma_{n,3}-{\frac {63}{100}
}\,\sigma_{n,1}{\sigma_{n,3}}^{2}+{\frac
{9}{8}}\,{\sigma_{n,1}}^{3} \sigma_{n,2}+{\frac
{27}{16}}\,{\sigma_{n,1}}^{2}{\sigma_{n,2}}^{2}-{ \frac
{63}{200}}\,{\sigma_{n,1}}^{3}\sigma_{n,3}\\[1mm]
&\hspace{4mm}-{\frac {189}{400}}\,{
\sigma_{n,1}}^{2}{\sigma_{n,3}}^{2}-{\frac
{63}{200}}\,{\sigma_{n,2}}^ {3}\sigma_{n,3}-{\frac
{189}{400}}\,{\sigma_{n,2}}^{2}{\sigma_{n,3}}^{ 2}+{\frac
{9}{8}}\,\sigma_{n,1}{\sigma_{n,2}}^{3}-{\frac {189}{200}}\,
\sigma_{n,1}\sigma_{n,2}{\sigma_{n,3}}^{2}-{\frac
{77}{400}}\,{\sigma_ {{3}}}^{2}-{\frac
{63}{200}}\,\sigma_{n,1}{\sigma_{n,3}}^{3}\\[1mm]
&\hspace{4mm}+\frac12\, \sigma_{n,1}+\frac14\,\sigma_{n,2}-{\frac
{7}{100}}\,\sigma_{n,3}+{\frac {
11}{8}}\,{\sigma_{n,1}}^{2}+\frac32\,{\sigma_{n,1}}^{3}+{\frac
{9}{16}}\,{ \sigma_{n,1}}^{4}-{\frac
{63}{800}}\,{\sigma_{n,3}}^{4}-{\frac {189}{
200}}\,\sigma_{n,1}{\sigma_{n,2}}^{2}\sigma_{n,3}-{\frac
{63}{200}}\, \sigma_{n,2}{\sigma_{n,3}}^{3}\\[1mm]
&\hspace{4mm}-{\frac {21}{100}}\,{\sigma_{n,3}}^{3}+{ \frac
{11}{16}}\,{\sigma_{n,2}}^{2}+\frac34\,{\sigma_{n,2}}^{3}+{\frac
{9} {32}}\,{\sigma_{n,2}}^{4}+{\frac {79159}{50000}}
,\\[1mm]
b_{12}(\sigma_n)&={\frac
{63}{20}}\,\sigma_{n,1}\sigma_{n,2}\sigma_{n,3}+{\frac {567}{
200}}\,{\sigma_{n,1}}^{2}\sigma_{n,2}\sigma_{n,3}-{\frac
{45}{8}}\, \sigma_{n,1}{\sigma_{n,2}}^{2}+{\frac
{63}{100}}\,\sigma_{n,2}\sigma_{
{3}}-\frac94\,\sigma_{n,1}\sigma_{n,2}-{\frac
{45}{8}}\,{\sigma_{n,1}}^{2} \sigma_{n,2}+{\frac
{63}{100}}\,\sigma_{n,1}\sigma_{n,3}\\[1mm]
&\hspace{4mm}+{\frac {63}{
40}}\,{\sigma_{n,1}}^{2}\sigma_{n,3}+{\frac
{63}{40}}\,\sigma_{n,2}{ \sigma_{n,3}}^{2}+{\frac
{63}{40}}\,{\sigma_{n,2}}^{2}\sigma_{n,3}+{ \frac
{63}{40}}\,\sigma_{n,1}{\sigma_{n,3}}^{2}-{\frac {27}{8}}\,{
\sigma_{n,1}}^{3}\sigma_{n,2}-{\frac
{81}{16}}\,{\sigma_{n,1}}^{2}{ \sigma_{n,2}}^{2}+{\frac
{189}{200}}\,{\sigma_{n,1}}^{3}\sigma_{n,3}\\[1mm]
&\hspace{4mm}+{ \frac
{567}{400}}\,{\sigma_{n,1}}^{2}{\sigma_{n,3}}^{2}+{\frac {189}{
200}}\,{\sigma_{n,2}}^{3}\sigma_{n,3}+{\frac
{567}{400}}\,{\sigma_{n,2 }}^{2}{\sigma_{n,3}}^{2}-{\frac
{27}{8}}\,\sigma_{n,1}{\sigma_{n,2}}^{ 3}+{\frac
{567}{200}}\,\sigma_{n,1}\sigma_{n,2}{\sigma_{n,3}}^{2}+{ \frac
{63}{200}}\,{\sigma_{n,3}}^{2}+{\frac {189}{200}}\,\sigma_{n,1}{
\sigma_{n,3}}^{3}\\[1mm]
&\hspace{4mm}-{\frac
{5661}{3125}}-\frac94\,{\sigma_{n,1}}^{2}-{\frac
{15}{4}}\,{\sigma_{n,1}}^{3}-{\frac
{27}{16}}\,{\sigma_{n,1}}^{4}+{ \frac
{189}{800}}\,{\sigma_{n,3}}^{4}+{\frac {567}{200}}\,\sigma_{n,1}
{\sigma_{n,2}}^{2}\sigma_{n,3}+{\frac
{189}{200}}\,\sigma_{n,2}{\sigma _{{3}}}^{3}+{\frac
{21}{40}}\,{\sigma_{n,3}}^{3}-{\frac {9}{8}}\,{
\sigma_{n,2}}^{2}\\[1mm]
&\hspace{4mm}-{\frac {15}{8}}\,{\sigma_{n,2}}^{3}-{\frac {27}{32}}
\,{\sigma_{n,2}}^{4}
,\\[1mm]
b_{13}(\sigma_n)&=-{\frac
{63}{25}}\,\sigma_{n,1}\sigma_{n,2}\sigma_{n,3}-{\frac {567}{
200}}\,{\sigma_{n,1}}^{2}\sigma_{n,2}\sigma_{n,3}+\frac92\,\sigma_{n,1}{
\sigma_{n,2}}^{2}-{\frac
{63}{200}}\,\sigma_{n,2}\sigma_{n,3}+{\frac {
9}{8}}\,\sigma_{n,1}\sigma_{n,2}+\frac92\,{\sigma_{n,1}}^{2}\sigma_{n,2}-{
\frac {63}{200}}\,\sigma_{n,1}\sigma_{n,3}\\[1mm]
&\hspace{4mm}-{\frac {63}{50}}\,{\sigma_{
{1}}}^{2}\sigma_{n,3}-{\frac
{63}{50}}\,\sigma_{n,2}{\sigma_{n,3}}^{2} -{\frac
{63}{50}}\,{\sigma_{n,2}}^{2}\sigma_{n,3}-{\frac {63}{50}}\,
\sigma_{n,1}{\sigma_{n,3}}^{2}+{\frac {27}{8}}\,{\sigma_{n,1}}^{3}
\sigma_{n,2}+{\frac
{81}{16}}\,{\sigma_{n,1}}^{2}{\sigma_{n,2}}^{2}-{ \frac
{189}{200}}\,{\sigma_{n,1}}^{3}\sigma_{n,3}\\[1mm]
&\hspace{4mm}-{\frac {567}{400}}\,
{\sigma_{n,1}}^{2}{\sigma_{n,3}}^{2}-{\frac
{189}{200}}\,{\sigma_{n,2} }^{3}\sigma_{n,3}-{\frac
{567}{400}}\,{\sigma_{n,2}}^{2}{\sigma_{n,3}} ^{2}+{\frac
{27}{8}}\,\sigma_{n,1}{\sigma_{n,2}}^{3}-{\frac {567}{200}
}\,\sigma_{n,1}\sigma_{n,2}{\sigma_{n,3}}^{2}-{\frac {63}{400}}\,{
\sigma_{n,3}}^{2}-{\frac
{189}{200}}\,\sigma_{n,1}{\sigma_{n,3}}^{3}\\[1mm]
&\hspace{4mm}+{ \frac
{9}{8}}\,{\sigma_{n,1}}^{2}+3\,{\sigma_{n,1}}^{3}+{\frac {27}{16
}}\,{\sigma_{n,1}}^{4}-{\frac
{189}{800}}\,{\sigma_{n,3}}^{4}+{\frac { 64413}{50000}}-{\frac
{567}{200}}\,\sigma_{n,1}{\sigma_{n,2}}^{2} \sigma_{n,3}-{\frac
{189}{200}}\,\sigma_{n,2}{\sigma_{n,3}}^{3}-{ \frac
{21}{50}}\,{\sigma_{n,3}}^{3}+{\frac {9}{16}}\,{\sigma_{n,2}}^{2
}\\[1mm]
&\hspace{4mm}+\frac32\,{\sigma_{n,2}}^{3}+{\frac
{27}{32}}\,{\sigma_{n,2}}^{4}
,\\[1mm]
b_{14}(\sigma_n)&={\frac
{63}{100}}\,\sigma_{n,1}\sigma_{n,2}\sigma_{n,3}+{\frac {189}{
200}}\,{\sigma_{n,1}}^{2}\sigma_{n,2}\sigma_{n,3}-{\frac {9}{8}}\,
\sigma_{n,1}{\sigma_{n,2}}^{2}+{\frac
{7}{100}}\,\sigma_{n,2}\sigma_{{
3}}-\frac14\,\sigma_{n,1}\sigma_{n,2}-{\frac
{9}{8}}\,{\sigma_{n,1}}^{2} \sigma_{n,2}+{\frac
{7}{100}}\,\sigma_{n,1}\sigma_{n,3}\\[1mm]
&\hspace{4mm}+{\frac {63}{
200}}\,{\sigma_{n,1}}^{2}\sigma_{n,3}+{\frac
{63}{200}}\,\sigma_{n,2}{ \sigma_{n,3}}^{2}+{\frac
{63}{200}}\,{\sigma_{n,2}}^{2}\sigma_{n,3}+{ \frac
{63}{200}}\,\sigma_{n,1}{\sigma_{n,3}}^{2}-{\frac {9}{8}}\,{
\sigma_{n,1}}^{3}\sigma_{n,2}-{\frac
{27}{16}}\,{\sigma_{n,1}}^{2}{ \sigma_{n,2}}^{2}+{\frac
{63}{200}}\,{\sigma_{n,1}}^{3}\sigma_{n,3}\\[1mm]
&\hspace{4mm}+{ \frac
{189}{400}}\,{\sigma_{n,1}}^{2}{\sigma_{n,3}}^{2}+{\frac {63}{
200}}\,{\sigma_{n,2}}^{3}\sigma_{n,3}+{\frac
{189}{400}}\,{\sigma_{n,2 }}^{2}{\sigma_{n,3}}^{2}-{\frac
{9}{8}}\,\sigma_{n,1}{\sigma_{n,2}}^{3 }+{\frac
{189}{200}}\,\sigma_{n,1}\sigma_{n,2}{\sigma_{n,3}}^{2}+{ \frac
{7}{200}}\,{\sigma_{n,3}}^{2}+{\frac {63}{200}}\,\sigma_{n,1}{
\sigma_{n,3}}^{3}\\[1mm]
&\hspace{4mm}-\frac14\,{\sigma_{n,1}}^{2}-\frac34\,{\sigma_{n,1}}^{3}-{
\frac {9}{16}}\,{\sigma_{n,1}}^{4}+{\frac
{63}{800}}\,{\sigma_{n,3}}^{ 4}-{\frac {749}{12500}}+{\frac
{189}{200}}\,\sigma_{n,1}{\sigma_{n,2}} ^{2}\sigma_{n,3}+{\frac
{63}{200}}\,\sigma_{n,2}{\sigma_{n,3}}^{3}+{ \frac
{21}{200}}\,{\sigma_{n,3}}^{3}-\frac18\,{\sigma_{n,2}}^{2}\\[1mm]
&\hspace{4mm}-\frac38\,{ \sigma_{n,2}}^{3}-{\frac
{9}{32}}\,{\sigma_{n,2}}^{4},
\end{align*}}
{\footnotesize
\begin{align*}
b_{21}(\sigma_n)&=-{\frac
{63}{50}}\,\sigma_{n,1}\sigma_{n,2}\sigma_{n,3}-{\frac {189}{
200}}\,{\sigma_{n,1}}^{2}\sigma_{n,2}\sigma_{n,3}+\frac94\,\sigma_{n,1}{
\sigma_{n,2}}^{2}-{\frac
{77}{200}}\,\sigma_{n,2}\sigma_{n,3}+{\frac {
11}{8}}\,\sigma_{n,1}\sigma_{n,2}+\frac94\,{\sigma_{n,1}}^{2}\sigma_{n,2}-
{\frac {77}{200}}\,\sigma_{n,1}\sigma_{n,3}\\[1mm]
&\hspace{4mm}-{\frac {63}{100}}\,{\sigma
_{{1}}}^{2}\sigma_{n,3}-{\frac
{63}{100}}\,\sigma_{n,2}{\sigma_{n,3}}^ {2}-{\frac
{63}{100}}\,{\sigma_{n,2}}^{2}\sigma_{n,3}-{\frac {63}{100}
}\,\sigma_{n,1}{\sigma_{n,3}}^{2}+{\frac
{9}{8}}\,{\sigma_{n,1}}^{3} \sigma_{n,2}+{\frac
{27}{16}}\,{\sigma_{n,1}}^{2}{\sigma_{n,2}}^{2}-{ \frac
{63}{200}}\,{\sigma_{n,1}}^{3}\sigma_{n,3}\\[1mm]
&\hspace{4mm}-{\frac {189}{400}}\,{
\sigma_{n,1}}^{2}{\sigma_{n,3}}^{2}-{\frac
{63}{200}}\,{\sigma_{n,2}}^ {3}\sigma_{n,3}-{\frac
{189}{400}}\,{\sigma_{n,2}}^{2}{\sigma_{n,3}}^{ 2}+{\frac
{9}{8}}\,\sigma_{n,1}{\sigma_{n,2}}^{3}-{\frac {189}{200}}\,
\sigma_{n,1}\sigma_{n,2}{\sigma_{n,3}}^{2}-{\frac
{77}{400}}\,{\sigma_ {{3}}}^{2}-{\frac
{63}{200}}\,\sigma_{n,1}{\sigma_{n,3}}^{3}\\[1mm]
&\hspace{4mm}+{\frac {
72909}{50000}}+\frac12\,\sigma_{n,1}+\frac14\,\sigma_{n,2}-{\frac
{7}{100}}\, \sigma_{n,3}+{\frac
{11}{8}}\,{\sigma_{n,1}}^{2}+\frac32\,{\sigma_{n,1}}^{ 3}+{\frac
{9}{16}}\,{\sigma_{n,1}}^{4}-{\frac {63}{800}}\,{\sigma_{n,3
}}^{4}-{\frac
{189}{200}}\,\sigma_{n,1}{\sigma_{n,2}}^{2}\sigma_{n,3}\\[1mm]
&\hspace{4mm}- {\frac
{63}{200}}\,\sigma_{n,2}{\sigma_{n,3}}^{3}-{\frac {21}{100}}\,{
\sigma_{n,3}}^{3}+{\frac
{11}{16}}\,{\sigma_{n,2}}^{2}+3/4\,{\sigma_{{ 2}}}^{3}+{\frac
{9}{32}}\,{\sigma_{n,2}}^{4}
,\\[1mm]
b_{22}(\sigma_n)&={\frac
{63}{20}}\,\sigma_{n,1}\sigma_{n,2}\sigma_{n,3}+{\frac {567}{
200}}\,{\sigma_{n,1}}^{2}\sigma_{n,2}\sigma_{n,3}-{\frac
{45}{8}}\, \sigma_{n,1}{\sigma_{n,2}}^{2}+{\frac
{63}{100}}\,\sigma_{n,2}\sigma_{
{3}}-9/4\,\sigma_{n,1}\sigma_{n,2}-{\frac
{45}{8}}\,{\sigma_{n,1}}^{2} \sigma_{n,2}+{\frac
{63}{100}}\,\sigma_{n,1}\sigma_{n,3}\\[1mm]
&\hspace{4mm}+{\frac {63}{
40}}\,{\sigma_{n,1}}^{2}\sigma_{n,3}+{\frac
{63}{40}}\,\sigma_{n,2}{ \sigma_{n,3}}^{2}+{\frac
{63}{40}}\,{\sigma_{n,2}}^{2}\sigma_{n,3}+{ \frac
{63}{40}}\,\sigma_{n,1}{\sigma_{n,3}}^{2}-{\frac {27}{8}}\,{
\sigma_{n,1}}^{3}\sigma_{n,2}-{\frac
{81}{16}}\,{\sigma_{n,1}}^{2}{ \sigma_{n,2}}^{2}+{\frac
{189}{200}}\,{\sigma_{n,1}}^{3}\sigma_{n,3}\\[1mm]
&\hspace{4mm}+{ \frac
{567}{400}}\,{\sigma_{n,1}}^{2}{\sigma_{n,3}}^{2}+{\frac {189}{
200}}\,{\sigma_{n,2}}^{3}\sigma_{n,3}+{\frac
{567}{400}}\,{\sigma_{n,2 }}^{2}{\sigma_{n,3}}^{2}-{\frac
{27}{8}}\,\sigma_{n,1}{\sigma_{n,2}}^{ 3}+{\frac
{567}{200}}\,\sigma_{n,1}\sigma_{n,2}{\sigma_{n,3}}^{2}-{ \frac
{54663}{25000}}+{\frac {63}{200}}\,{\sigma_{n,3}}^{2}\\[1mm]
&\hspace{4mm}+{\frac {
189}{200}}\,\sigma_{n,1}{\sigma_{n,3}}^{3}-\frac94\,{\sigma_{n,1}}^{2}-{
\frac {15}{4}}\,{\sigma_{n,1}}^{3}-{\frac
{27}{16}}\,{\sigma_{n,1}}^{4 }+{\frac
{189}{800}}\,{\sigma_{n,3}}^{4}+{\frac {567}{200}}\,\sigma_{{
1}}{\sigma_{n,2}}^{2}\sigma_{n,3}+{\frac
{189}{200}}\,\sigma_{n,2}{ \sigma_{n,3}}^{3}+{\frac
{21}{40}}\,{\sigma_{n,3}}^{3}\\[1mm]
&\hspace{4mm}-{\frac {9}{8}} \,{\sigma_{n,2}}^{2}-{\frac
{15}{8}}\,{\sigma_{n,2}}^{3}-{\frac {27}{ 32}}\,{\sigma_{n,2}}^{4}
,\\[1mm]
b_{23}(\sigma_n)&=-{\frac
{63}{25}}\,\sigma_{n,1}\sigma_{n,2}\sigma_{n,3}-{\frac {567}{
200}}\,{\sigma_{n,1}}^{2}\sigma_{n,2}\sigma_{n,3}+{\frac
{45663}{50000 }}+\frac92\,\sigma_{n,1}{\sigma_{n,2}}^{2}-{\frac
{63}{200}}\,\sigma_{n,2} \sigma_{n,3}+{\frac
{9}{8}}\,\sigma_{n,1}\sigma_{n,2}+9/2\,{\sigma_{{n,1
}}}^{2}\sigma_{n,2}\\[1mm]
&\hspace{4mm}-{\frac {63}{200}}\,\sigma_{n,1}\sigma_{n,3}-{ \frac
{63}{50}}\,{\sigma_{n,1}}^{2}\sigma_{n,3}-{\frac {63}{50}}\,
\sigma_{n,2}{\sigma_{n,3}}^{2}-{\frac
{63}{50}}\,{\sigma_{n,2}}^{2} \sigma_{n,3}-{\frac
{63}{50}}\,\sigma_{n,1}{\sigma_{n,3}}^{2}+{\frac {
27}{8}}\,{\sigma_{n,1}}^{3}\sigma_{n,2}+{\frac
{81}{16}}\,{\sigma_{n,1 }}^{2}{\sigma_{n,2}}^{2}\\[1mm]
&\hspace{4mm}-{\frac {189}{200}}\,{\sigma_{n,1}}^{3}\sigma_
{{3}}-{\frac
{567}{400}}\,{\sigma_{n,1}}^{2}{\sigma_{n,3}}^{2}-{\frac
{189}{200}}\,{\sigma_{n,2}}^{3}\sigma_{n,3}-{\frac {567}{400}}\,{
\sigma_{n,2}}^{2}{\sigma_{n,3}}^{2}+{\frac {27}{8}}\,\sigma_{n,1}{
\sigma_{n,2}}^{3}-{\frac
{567}{200}}\,\sigma_{n,1}\sigma_{n,2}{\sigma_ {{3}}}^{2}-{\frac
{63}{400}}\,{\sigma_{n,3}}^{2}\\[1mm]
&\hspace{4mm}-{\frac {189}{200}}\,
\sigma_{n,1}{\sigma_{n,3}}^{3}+{\frac
{9}{8}}\,{\sigma_{n,1}}^{2}+3\,{ \sigma_{n,1}}^{3}+{\frac
{27}{16}}\,{\sigma_{n,1}}^{4}-{\frac {189}{
800}}\,{\sigma_{n,3}}^{4}-{\frac
{567}{200}}\,\sigma_{n,1}{\sigma_{n,2 }}^{2}\sigma_{n,3}-{\frac
{189}{200}}\,\sigma_{n,2}{\sigma_{n,3}}^{3}- {\frac
{21}{50}}\,{\sigma_{n,3}}^{3}+{\frac {9}{16}}\,{\sigma_{n,2}}^{
2}\\[1mm]
&\hspace{4mm}+\frac32\,{\sigma_{n,2}}^{3}+{\frac
{27}{32}}\,{\sigma_{n,2}}^{4}
,\\[2mm]
b_{24}(\sigma_n)&={\frac
{63}{100}}\,\sigma_{n,1}\sigma_{n,2}\sigma_{n,3}-{\frac {4623}{
25000}}+{\frac
{189}{200}}\,{\sigma_{n,1}}^{2}\sigma_{n,2}\sigma_{n,3} -{\frac
{9}{8}}\,\sigma_{n,1}{\sigma_{n,2}}^{2}+{\frac {7}{100}}\,
\sigma_{n,2}\sigma_{n,3}-1/4\,\sigma_{n,1}\sigma_{n,2}-{\frac
{9}{8}} \,{\sigma_{n,1}}^{2}\sigma_{n,2}+{\frac
{7}{100}}\,\sigma_{n,1}\sigma_ {{3}}\\[1mm]
&\hspace{4mm}+{\frac
{63}{200}}\,{\sigma_{n,1}}^{2}\sigma_{n,3}+{\frac {63}{
200}}\,\sigma_{n,2}{\sigma_{n,3}}^{2}+{\frac
{63}{200}}\,{\sigma_{n,2} }^{2}\sigma_{n,3}+{\frac
{63}{200}}\,\sigma_{n,1}{\sigma_{n,3}}^{2}-{ \frac
{9}{8}}\,{\sigma_{n,1}}^{3}\sigma_{n,2}-{\frac {27}{16}}\,{
\sigma_{n,1}}^{2}{\sigma_{n,2}}^{2}+{\frac
{63}{200}}\,{\sigma_{n,1}}^ {3}\sigma_{n,3}\\[1mm]
&\hspace{4mm}+{\frac
{189}{400}}\,{\sigma_{n,1}}^{2}{\sigma_{n,3}}^{ 2}+{\frac
{63}{200}}\,{\sigma_{n,2}}^{3}\sigma_{n,3}+{\frac {189}{400}
}\,{\sigma_{n,2}}^{2}{\sigma_{n,3}}^{2}-{\frac
{9}{8}}\,\sigma_{n,1}{ \sigma_{n,2}}^{3}+{\frac
{189}{200}}\,\sigma_{n,1}\sigma_{n,2}{\sigma_ {{3}}}^{2}+{\frac
{7}{200}}\,{\sigma_{n,3}}^{2}+{\frac {63}{200}}\,
\sigma_{n,1}{\sigma_{n,3}}^{3}\\[1mm]
&\hspace{4mm}-\frac14\,{\sigma_{n,1}}^{2}-\frac34\,{\sigma_{{
1}}}^{3}-{\frac {9}{16}}\,{\sigma_{n,1}}^{4}+{\frac {63}{800}}\,{
\sigma_{n,3}}^{4}+{\frac
{189}{200}}\,\sigma_{n,1}{\sigma_{n,2}}^{2} \sigma_{n,3}+{\frac
{63}{200}}\,\sigma_{n,2}{\sigma_{n,3}}^{3}+{\frac
{21}{200}}\,{\sigma_{n,3}}^{3}-1/8\,{\sigma_{n,2}}^{2}-3/8\,{\sigma_{{
2}}}^{3}-{\frac {9}{32}}\,{\sigma_{n,2}}^{4}
,\\[2mm]
b_{31}(\sigma_n)&=-{\frac
{63}{50}}\,\sigma_{n,1}\sigma_{n,2}\sigma_{n,3}-{\frac {189}{
200}}\,{\sigma_{n,1}}^{2}\sigma_{n,2}\sigma_{n,3}+\frac94\,\sigma_{n,1}{
\sigma_{n,2}}^{2}-{\frac
{77}{200}}\,\sigma_{n,2}\sigma_{n,3}+{\frac {
11}{8}}\,\sigma_{n,1}\sigma_{n,2}+\frac94\,{\sigma_{n,1}}^{2}\sigma_{n,2}-
{\frac {77}{200}}\,\sigma_{n,1}\sigma_{n,3}\\[1mm]
&\hspace{4mm}-{\frac {63}{100}}\,{\sigma
_{{1}}}^{2}\sigma_{n,3}-{\frac
{63}{100}}\,\sigma_{n,2}{\sigma_{n,3}}^ {2}-{\frac
{63}{100}}\,{\sigma_{n,2}}^{2}\sigma_{n,3}-{\frac {63}{100}
}\,\sigma_{n,1}{\sigma_{n,3}}^{2}+{\frac
{9}{8}}\,{\sigma_{n,1}}^{3} \sigma_{n,2}+{\frac
{27}{16}}\,{\sigma_{n,1}}^{2}{\sigma_{n,2}}^{2}-{ \frac
{63}{200}}\,{\sigma_{n,1}}^{3}\sigma_{n,3}\\[1mm]
&\hspace{4mm}-{\frac {189}{400}}\,{
\sigma_{n,1}}^{2}{\sigma_{n,3}}^{2}-{\frac
{63}{200}}\,{\sigma_{n,2}}^ {3}\sigma_{n,3}-{\frac
{189}{400}}\,{\sigma_{n,2}}^{2}{\sigma_{n,3}}^{ 2}+{\frac
{9}{8}}\,\sigma_{n,1}{\sigma_{n,2}}^{3}-{\frac {189}{200}}\,
\sigma_{n,1}\sigma_{n,2}{\sigma_{n,3}}^{2}-{\frac
{77}{400}}\,{\sigma_ {{3}}}^{2}-{\frac
{63}{200}}\,\sigma_{n,1}{\sigma_{n,3}}^{3}\\[1mm]
&\hspace{4mm}+{\frac {
72909}{50000}}-\frac12\,\sigma_{n,1}+\frac14\,\sigma_{n,2}-{\frac
{7}{100}}\, \sigma_{n,3}-{\frac
{11}{8}}\,{\sigma_{n,1}}^{2}-\frac32\,{\sigma_{n,1}}^{ 3}-{\frac
{9}{16}}\,{\sigma_{n,1}}^{4}-{\frac {63}{800}}\,{\sigma_{n,3
}}^{4}-{\frac
{189}{200}}\,\sigma_{n,1}{\sigma_{n,2}}^{2}\sigma_{n,3}- {\frac
{63}{200}}\,\sigma_{n,2}{\sigma_{n,3}}^{3}\\[1mm]
&\hspace{4mm}-{\frac {21}{100}}\,{ \sigma_{n,3}}^{3}+{\frac
{11}{16}}\,{\sigma_{n,2}}^{2}+\frac34\,{\sigma_{{ 2}}}^{3}+{\frac
{9}{32}}\,{\sigma_{n,2}}^{4},
\end{align*}}
{\footnotesize
\begin{align*}
b_{32}(\sigma_n)&={\frac
{63}{20}}\,\sigma_{n,1}\sigma_{n,2}\sigma_{n,3}+{\frac {567}{
200}}\,{\sigma_{n,1}}^{2}\sigma_{n,2}\sigma_{n,3}-{\frac
{45}{8}}\, \sigma_{n,1}{\sigma_{n,2}}^{2}+{\frac
{63}{100}}\,\sigma_{n,2}\sigma_{
{3}}-\frac94\,\sigma_{n,1}\sigma_{n,2}-{\frac
{45}{8}}\,{\sigma_{n,1}}^{2} \sigma_{n,2}+{\frac
{63}{100}}\,\sigma_{n,1}\sigma_{n,3}\\[1mm]
&\hspace{4mm}+{\frac {63}{
40}}\,{\sigma_{n,1}}^{2}\sigma_{n,3}+{\frac
{63}{40}}\,\sigma_{n,2}{ \sigma_{n,3}}^{2}+{\frac
{63}{40}}\,{\sigma_{n,2}}^{2}\sigma_{n,3}+{ \frac
{63}{40}}\,\sigma_{n,1}{\sigma_{n,3}}^{2}-{\frac {27}{8}}\,{
\sigma_{n,1}}^{3}\sigma_{n,2}-{\frac
{81}{16}}\,{\sigma_{n,1}}^{2}{ \sigma_{n,2}}^{2}+{\frac
{189}{200}}\,{\sigma_{n,1}}^{3}\sigma_{n,3}\\[1mm]
&\hspace{4mm}+{ \frac
{567}{400}}\,{\sigma_{n,1}}^{2}{\sigma_{n,3}}^{2}+{\frac {189}{
200}}\,{\sigma_{n,2}}^{3}\sigma_{n,3}+{\frac
{567}{400}}\,{\sigma_{n,2 }}^{2}{\sigma_{n,3}}^{2}-{\frac
{27}{8}}\,\sigma_{n,1}{\sigma_{n,2}}^{ 3}+{\frac
{567}{200}}\,\sigma_{n,1}\sigma_{n,2}{\sigma_{n,3}}^{2}-{ \frac
{54663}{25000}}+{\frac {63}{200}}\,{\sigma_{n,3}}^{2}\\[1mm]
&\hspace{4mm}+{\frac {
189}{200}}\,\sigma_{n,1}{\sigma_{n,3}}^{3}+\frac94\,{\sigma_{n,1}}^{2}+{
\frac {15}{4}}\,{\sigma_{n,1}}^{3}+{\frac
{27}{16}}\,{\sigma_{n,1}}^{4 }+{\frac
{189}{800}}\,{\sigma_{n,3}}^{4}+{\frac {567}{200}}\,\sigma_{{
1}}{\sigma_{n,2}}^{2}\sigma_{n,3}+{\frac
{189}{200}}\,\sigma_{n,2}{ \sigma_{n,3}}^{3}+{\frac
{21}{40}}\,{\sigma_{n,3}}^{3}\\[1mm]
&\hspace{4mm}-{\frac {9}{8}} \,{\sigma_{n,2}}^{2}-{\frac
{15}{8}}\,{\sigma_{n,2}}^{3}-{\frac {27}{ 32}}\,{\sigma_{n,2}}^{4}
,\\[1mm]
b_{33}(\sigma_n)&=-{\frac
{63}{25}}\,\sigma_{n,1}\sigma_{n,2}\sigma_{n,3}-{\frac {567}{
200}}\,{\sigma_{n,1}}^{2}\sigma_{n,2}\sigma_{n,3}+{\frac
{45663}{50000 }}+\frac92\,\sigma_{n,1}{\sigma_{n,2}}^{2}-{\frac
{63}{200}}\,\sigma_{n,2} \sigma_{n,3}+{\frac
{9}{8}}\,\sigma_{n,1}\sigma_{n,2}+\frac92\,{\sigma_{{n,1
}}}^{2}\sigma_{n,2}-{\frac {63}{200}}\,\sigma_{n,1}\sigma_{n,3}\\[1mm]
&\hspace{4mm}-{ \frac
{63}{50}}\,{\sigma_{n,1}}^{2}\sigma_{n,3}-{\frac {63}{50}}\,
\sigma_{n,2}{\sigma_{n,3}}^{2}-{\frac
{63}{50}}\,{\sigma_{n,2}}^{2} \sigma_{n,3}-{\frac
{63}{50}}\,\sigma_{n,1}{\sigma_{n,3}}^{2}+{\frac {
27}{8}}\,{\sigma_{n,1}}^{3}\sigma_{n,2}+{\frac
{81}{16}}\,{\sigma_{n,1 }}^{2}{\sigma_{n,2}}^{2}-{\frac
{189}{200}}\,{\sigma_{n,1}}^{3}\sigma_ {{3}}\\[1mm]
&\hspace{4mm}-{\frac
{567}{400}}\,{\sigma_{n,1}}^{2}{\sigma_{n,3}}^{2}-{\frac
{189}{200}}\,{\sigma_{n,2}}^{3}\sigma_{n,3}-{\frac {567}{400}}\,{
\sigma_{n,2}}^{2}{\sigma_{n,3}}^{2}+{\frac {27}{8}}\,\sigma_{n,1}{
\sigma_{n,2}}^{3}-{\frac
{567}{200}}\,\sigma_{n,1}\sigma_{n,2}{\sigma_ {{3}}}^{2}-{\frac
{63}{400}}\,{\sigma_{n,3}}^{2}-{\frac {189}{200}}\,
\sigma_{n,1}{\sigma_{n,3}}^{3}\\[1mm]
&\hspace{4mm}-{\frac {9}{8}}\,{\sigma_{n,1}}^{2}-3\,{
\sigma_{n,1}}^{3}-{\frac {27}{16}}\,{\sigma_{n,1}}^{4}-{\frac
{189}{ 800}}\,{\sigma_{n,3}}^{4}-{\frac
{567}{200}}\,\sigma_{n,1}{\sigma_{n,2 }}^{2}\sigma_{n,3}-{\frac
{189}{200}}\,\sigma_{n,2}{\sigma_{n,3}}^{3}- {\frac
{21}{50}}\,{\sigma_{n,3}}^{3}+{\frac {9}{16}}\,{\sigma_{n,2}}^{
2}+\frac32\,{\sigma_{n,2}}^{3}+{\frac
{27}{32}}\,{\sigma_{n,2}}^{4}
,\\[2mm]
b_{34}(\sigma_n)&={\frac
{63}{100}}\,\sigma_{n,1}\sigma_{n,2}\sigma_{n,3}-{\frac {4623}{
25000}}+{\frac
{189}{200}}\,{\sigma_{n,1}}^{2}\sigma_{n,2}\sigma_{n,3} -{\frac
{9}{8}}\,\sigma_{n,1}{\sigma_{n,2}}^{2}+{\frac {7}{100}}\,
\sigma_{n,2}\sigma_{n,3}-\frac14\,\sigma_{n,1}\sigma_{n,2}-{\frac
{9}{8}} \,{\sigma_{n,1}}^{2}\sigma_{n,2}+{\frac
{7}{100}}\,\sigma_{n,1}\sigma_ {{3}}\\[1mm]
&\hspace{4mm}+{\frac
{63}{200}}\,{\sigma_{n,1}}^{2}\sigma_{n,3}+{\frac {63}{
200}}\,\sigma_{n,2}{\sigma_{n,3}}^{2}+{\frac
{63}{200}}\,{\sigma_{n,2} }^{2}\sigma_{n,3}+{\frac
{63}{200}}\,\sigma_{n,1}{\sigma_{n,3}}^{2}-{ \frac
{9}{8}}\,{\sigma_{n,1}}^{3}\sigma_{n,2}-{\frac {27}{16}}\,{
\sigma_{n,1}}^{2}{\sigma_{n,2}}^{2}+{\frac
{63}{200}}\,{\sigma_{n,1}}^ {3}\sigma_{n,3}\\[1mm]
&\hspace{4mm}+{\frac
{189}{400}}\,{\sigma_{n,1}}^{2}{\sigma_{n,3}}^{ 2}+{\frac
{63}{200}}\,{\sigma_{n,2}}^{3}\sigma_{n,3}+{\frac {189}{400}
}\,{\sigma_{n,2}}^{2}{\sigma_{n,3}}^{2}-{\frac
{9}{8}}\,\sigma_{n,1}{ \sigma_{n,2}}^{3}+{\frac
{189}{200}}\,\sigma_{n,1}\sigma_{n,2}{\sigma_ {{3}}}^{2}+{\frac
{7}{200}}\,{\sigma_{n,3}}^{2}+{\frac {63}{200}}\,
\sigma_{n,1}{\sigma_{n,3}}^{3}\\[1mm]
&\hspace{4mm}+\frac14\,{\sigma_{n,1}}^{2}+\frac34\,{\sigma_{{
1}}}^{3}+{\frac {9}{16}}\,{\sigma_{n,1}}^{4}+{\frac {63}{800}}\,{
\sigma_{n,3}}^{4}+{\frac
{189}{200}}\,\sigma_{n,1}{\sigma_{n,2}}^{2} \sigma_{n,3}+{\frac
{63}{200}}\,\sigma_{n,2}{\sigma_{n,3}}^{3}+{\frac
{21}{200}}\,{\sigma_{n,3}}^{3}-\frac18\,{\sigma_{n,2}}^{2}-\frac38\,{\sigma_{{
2}}}^{3}-{\frac {9}{32}}\,{\sigma_{n,2}}^{4},\\[1mm]
b_{41}(\sigma_n)&=-{\frac
{63}{50}}\,\sigma_{n,1}\sigma_{n,2}\sigma_{n,3}-{\frac {189}{
200}}\,{\sigma_{n,1}}^{2}\sigma_{n,2}\sigma_{n,3}-{\frac
{27}{4}}\, \sigma_{n,1}{\sigma_{n,2}}^{2}-{\frac
{77}{200}}\,\sigma_{n,2}\sigma_{ {3}}-{\frac
{33}{8}}\,\sigma_{n,1}\sigma_{n,2}-{\frac {27}{4}}\,{
\sigma_{n,1}}^{2}\sigma_{n,2}-{\frac
{77}{200}}\,\sigma_{n,1}\sigma_{{ 3}}\\[1mm]
&\hspace{4mm}-{\frac
{63}{100}}\,{\sigma_{n,1}}^{2}\sigma_{n,3}-{\frac {63}{100}
}\,\sigma_{n,2}{\sigma_{n,3}}^{2}-{\frac
{63}{100}}\,{\sigma_{n,2}}^{2 }\sigma_{n,3}-{\frac
{63}{100}}\,\sigma_{n,1}{\sigma_{n,3}}^{2}-{ \frac
{27}{8}}\,{\sigma_{n,1}}^{3}\sigma_{n,2}-{\frac {81}{16}}\,{
\sigma_{n,1}}^{2}{\sigma_{n,2}}^{2}-{\frac
{63}{200}}\,{\sigma_{n,1}}^ {3}\sigma_{n,3}\\[1mm]
&\hspace{4mm}-{\frac
{189}{400}}\,{\sigma_{n,1}}^{2}{\sigma_{n,3}}^{ 2}-{\frac
{63}{200}}\,{\sigma_{n,2}}^{3}\sigma_{n,3}-{\frac {189}{400}
}\,{\sigma_{n,2}}^{2}{\sigma_{n,3}}^{2}-{\frac
{27}{8}}\,\sigma_{n,1}{ \sigma_{n,2}}^{3}-{\frac
{189}{200}}\,\sigma_{n,1}\sigma_{n,2}{\sigma_ {{3}}}^{2}-{\frac
{77}{400}}\,{\sigma_{n,3}}^{2}-{\frac {63}{200}}\,
\sigma_{n,1}{\sigma_{n,3}}^{3}\\[1mm]
&\hspace{4mm}+{\frac {72909}{50000}}-\frac12\,\sigma_{n,1
}-\frac34\,\sigma_{n,2}-{\frac {7}{100}}\,\sigma_{n,3}-{\frac
{11}{8}}\,{ \sigma_{n,1}}^{2}-\frac32\,{\sigma_{n,1}}^{3}-{\frac
{9}{16}}\,{\sigma_{{n,1 }}}^{4}-{\frac
{63}{800}}\,{\sigma_{n,3}}^{4}-{\frac {189}{200}}\,
\sigma_{n,1}{\sigma_{n,2}}^{2}\sigma_{n,3}\\[1mm]
&\hspace{4mm}-{\frac {63}{200}}\,\sigma_{
{2}}{\sigma_{n,3}}^{3}-{\frac
{21}{100}}\,{\sigma_{n,3}}^{3}-{\frac {
33}{16}}\,{\sigma_{n,2}}^{2}-\frac94\,{\sigma_{n,2}}^{3}-{\frac
{27}{32}} \,{\sigma_{n,2}}^{4} ,\\[1mm]
b_{42}(\sigma_n)&={\frac
{63}{20}}\,\sigma_{n,1}\sigma_{n,2}\sigma_{n,3}+{\frac {567}{
200}}\,{\sigma_{n,1}}^{2}\sigma_{n,2}\sigma_{n,3}+{\frac
{135}{8}}\, \sigma_{n,1}{\sigma_{n,2}}^{2}+{\frac
{63}{100}}\,\sigma_{n,2}\sigma_{ {3}}+{\frac
{27}{4}}\,\sigma_{n,1}\sigma_{n,2}+{\frac {135}{8}}\,{
\sigma_{n,1}}^{2}\sigma_{n,2}+{\frac
{63}{100}}\,\sigma_{n,1}\sigma_{{ 3}}\\[1mm]
&\hspace{4mm}+{\frac
{63}{40}}\,{\sigma_{n,1}}^{2}\sigma_{n,3}+{\frac {63}{40}}
\,\sigma_{n,2}{\sigma_{n,3}}^{2}+{\frac
{63}{40}}\,{\sigma_{n,2}}^{2} \sigma_{n,3}+{\frac
{63}{40}}\,\sigma_{n,1}{\sigma_{n,3}}^{2}+{\frac {
81}{8}}\,{\sigma_{n,1}}^{3}\sigma_{n,2}+{\frac
{243}{16}}\,{\sigma_{{n,1 }}}^{2}{\sigma_{n,2}}^{2}+{\frac
{189}{200}}\,{\sigma_{n,1}}^{3}\sigma _{{3}}\\[1mm]
&\hspace{4mm}+{\frac
{567}{400}}\,{\sigma_{n,1}}^{2}{\sigma_{n,3}}^{2}+{ \frac
{189}{200}}\,{\sigma_{n,2}}^{3}\sigma_{n,3}+{\frac {567}{400}}\,
{\sigma_{n,2}}^{2}{\sigma_{n,3}}^{2}+{\frac
{81}{8}}\,\sigma_{n,1}{ \sigma_{n,2}}^{3}+{\frac
{567}{200}}\,\sigma_{n,1}\sigma_{n,2}{\sigma_ {{3}}}^{2}-{\frac
{54663}{25000}}+{\frac {63}{200}}\,{\sigma_{n,3}}^{2 }\\[1mm]
&\hspace{4mm}+{\frac
{189}{200}}\,\sigma_{n,1}{\sigma_{n,3}}^{3}+\frac94\,{\sigma_{n,1
}}^{2}+{\frac {15}{4}}\,{\sigma_{n,1}}^{3}+{\frac
{27}{16}}\,{\sigma_{ {1}}}^{4}+{\frac
{189}{800}}\,{\sigma_{n,3}}^{4}+{\frac {567}{200}}\,
\sigma_{n,1}{\sigma_{n,2}}^{2}\sigma_{n,3}+{\frac
{189}{200}}\,\sigma_ {{2}}{\sigma_{n,3}}^{3}+{\frac
{21}{40}}\,{\sigma_{n,3}}^{3}+{\frac {
27}{8}}\,{\sigma_{n,2}}^{2}\\[1mm]
&\hspace{4mm}+{\frac {45}{8}}\,{\sigma_{n,2}}^{3}+{ \frac
{81}{32}}\,{\sigma_{n,2}}^{4} ,
\end{align*}}
{\footnotesize\begin{align*} b_{43}(\sigma_n)&=-{\frac
{63}{25}}\,\sigma_{n,1}\sigma_{n,2}\sigma_{n,3}-{\frac {567}{
200}}\,{\sigma_{n,1}}^{2}\sigma_{n,2}\sigma_{n,3}+{\frac
{45663}{50000 }}-{\frac
{27}{2}}\,\sigma_{n,1}{\sigma_{n,2}}^{2}-{\frac {63}{200}}\,
\sigma_{n,2}\sigma_{n,3}-{\frac
{27}{8}}\,\sigma_{n,1}\sigma_{n,2}-{ \frac
{27}{2}}\,{\sigma_{n,1}}^{2}\sigma_{n,2}\\[1mm]
&\hspace{4mm}-{\frac {63}{200}}\, \sigma_{n,1}\sigma_{n,3}-{\frac
{63}{50}}\,{\sigma_{n,1}}^{2}\sigma_{{ 3}}-{\frac
{63}{50}}\,\sigma_{n,2}{\sigma_{n,3}}^{2}-{\frac {63}{50}}
\,{\sigma_{n,2}}^{2}\sigma_{n,3}-{\frac
{63}{50}}\,\sigma_{n,1}{\sigma _{{3}}}^{2}-{\frac
{81}{8}}\,{\sigma_{n,1}}^{3}\sigma_{n,2}-{\frac {
243}{16}}\,{\sigma_{n,1}}^{2}{\sigma_{n,2}}^{2}\\[1mm]
&\hspace{4mm}-{\frac {189}{200}}\,{
\sigma_{n,1}}^{3}\sigma_{n,3}-{\frac
{567}{400}}\,{\sigma_{n,1}}^{2}{ \sigma_{n,3}}^{2}-{\frac
{189}{200}}\,{\sigma_{n,2}}^{3}\sigma_{n,3}-{ \frac
{567}{400}}\,{\sigma_{n,2}}^{2}{\sigma_{n,3}}^{2}-{\frac {81}{8}
}\,\sigma_{n,1}{\sigma_{n,2}}^{3}-{\frac {567}{200}}\,\sigma_{n,1}
\sigma_{n,2}{\sigma_{n,3}}^{2}\\[1mm]
&\hspace{4mm}-{\frac {63}{400}}\,{\sigma_{n,3}}^{2}-{ \frac
{189}{200}}\,\sigma_{n,1}{\sigma_{n,3}}^{3}-{\frac {9}{8}}\,{
\sigma_{n,1}}^{2}-3\,{\sigma_{n,1}}^{3}-{\frac
{27}{16}}\,{\sigma_{n,1 }}^{4}-{\frac
{189}{800}}\,{\sigma_{n,3}}^{4}-{\frac {567}{200}}\,
\sigma_{n,1}{\sigma_{n,2}}^{2}\sigma_{n,3}-{\frac
{189}{200}}\,\sigma_ {{2}}{\sigma_{n,3}}^{3}\\[2mm]
&\hspace{4mm}-{\frac {21}{50}}\,{\sigma_{n,3}}^{3}-{\frac {
27}{16}}\,{\sigma_{n,2}}^{2}-\frac92\,{\sigma_{n,2}}^{3}-{\frac
{81}{32}} \,{\sigma_{n,2}}^{4}
,\\[2mm]
b_{44}(\sigma_n)&={\frac
{27}{8}}\,\sigma_{{n,1}}{\sigma_{{n,2}}}^{2}-{\frac
{4623}{25000}}+ {\frac {7}{200}}\,{\sigma_{{n,3}}}^{2}+{\frac
{63}{800}}\,{\sigma_{{n,3}}} ^{4}+{\frac
{21}{200}}\,{\sigma_{{n,3}}}^{3}+{\frac {63}{200}}\,\sigma_{
{n,1}}{\sigma_{{n,3}}}^{3}+{\frac
{7}{100}}\,\sigma_{{n,1}}\sigma_{{n,3}}+{ \frac
{63}{200}}\,\sigma_{{n,2}}{\sigma_{{n,3}}}^{3}\\[2mm]
&\hspace{4mm}+{\frac {27}{32}}\,{ \sigma_{{n,2}}}^{4}+{\frac
{9}{8}}\,{\sigma_{{n,2}}}^{3}+\frac38\,{\sigma_{{n,2}
}}^{2}+\frac34\,\sigma_{{n,1}}\sigma_{{n,2}}+{\frac
{63}{200}}\,{\sigma_{{n,2}}} ^{2}\sigma_{{n,3}}+{\frac
{63}{200}}\,\sigma_{{n,1}}{\sigma_{{n,3}}}^{2}+{ \frac
{189}{400}}\,{\sigma_{{n,2}}}^{2}{\sigma_{{n,3}}}^{2}+{\frac {63}{
200}}\,{\sigma_{{n,2}}}^{3}\sigma_{{n,3}}
\\[2mm]
&\hspace{4mm}+{\frac {189}{400}}\,{\sigma_{{n,1}
}}^{2}{\sigma_{{n,3}}}^{2}+{\frac
{63}{200}}\,{\sigma_{{n,1}}}^{3}\sigma_{ {3}}+{\frac
{63}{200}}\,{\sigma_{{n,1}}}^{2}\sigma_{{n,3}}+{\frac {7}{100}
}\,\sigma_{{n,2}}\sigma_{{n,3}}
+{\frac
{27}{8}}\,{\sigma_{{n,1}}}^{2}\sigma_ {{n,2}}+{\frac
{27}{8}}\,\sigma_{{n,1}}{\sigma_{{n,2}}}^{3}+{\frac {81}{16}}
\,{\sigma_{{n,1}}}^{2}{\sigma_{{n,2}}}^{2}\\[2mm]
&\hspace{4mm}+{\frac {27}{8}}\,{\sigma_{{n,1}}}
^{3}\sigma_{{n,2}}+{\frac
{63}{200}}\,\sigma_{{n,2}}{\sigma_{{n,3}}}^{2}+{ \frac
{189}{200}}\,{\sigma_{{n,1}}}^{2}\sigma_{{n,2}}\sigma_{{n,3}}+{\frac
{
189}{200}}\,\sigma_{{n,1}}{\sigma_{{n,2}}}^{2}\sigma_{{n,3}}+{\frac
{189}{
200}}\,\sigma_{{n,1}}\sigma_{{n,2}}{\sigma_{{n,3}}}^{2}+{\frac
{63}{100}}\,
\sigma_{{n,1}}\sigma_{{n,2}}\sigma_{{n,3}}\\[2mm]
&\hspace{4mm}+\frac14\,{\sigma_{{n,1}}}^{2}+\frac34\,{
\sigma_{{n,1}}}^{3}+{\frac {9}{16}}\,{\sigma_{{n,1}}}^{4} .
\end{align*}}
\setcounter{equation}{0} \setcounter{definition}{0}
\setcounter{theorem}{0}
\section{Numerical Experiments}\label{Sec4}
In this section, we present the results of numerical experiments
for the VS SDIMSIMs of orders $p\leq4$ constructed in the previous
section. The utilized stepsize pattern is according to the
following formula
\[
h_{n+1} = \rho^{(-1)^n \sin(5\pi n/(X-x_0))}\cdot h_n,
\]
with $h_0=(X-x_0)/N$, which changes the stepsize rapidly. We
should say that the resulting grid $x_n$, $n=0,1,\cdots,N$, is
uniformly rescaled so that $x_N=X$. In our numerical results, we
report $\texttt{ge}$ as the error of the methods at the point $X$
measured in the maximum norm. Moreover, we provide a numerical
estimation to the order of convergence, $p$ which is computed by
the formula
\[
O_N:=\log\Bigl(\frac{\texttt{ge}_1}{\texttt{ge}_2}\Bigr)\Big/\log
\Bigl(\frac{N_2}{N_1}\Bigr),
\]
where $\texttt{ge}_1$ and $\texttt{ge}_2$ respectively stand for
the global error of the methods corresponding to $N_1$ and $N_2$
grid points.

As a first example, we consider the following linear nonstiff ODEs
\begin{align}\label{eq:lin}
\left[
\begin{array}{c}
y'_1(x) \\[1mm]
y'_2(x)
\end{array}
\right]=\left[
\begin{array}{cc}
\phantom{-}1 & \phantom{-}1 \\
-2 & -1
\end{array}
\right]\left[
\begin{array}{c}
y_1(x) \\[1mm]
y_2(x)
\end{array}
\right], \quad \left[
\begin{array}{c}
y_1(0) \\[1mm]
y_2(0)
\end{array}
\right]=\left[
\begin{array}{c}
2 \\
1
\end{array}
\right],
\end{align}
with $X=5\pi$ whose the exact solution is given by
\[
y_1(x)=3\sin(x)+2\cos(x), \quad y_2(x)=\cos(x)-5\sin(x).
\]
The numerical results with $\rho=2$ and $\rho=4$ for this problem
are respectively reported in Tables \ref{Tab:eq:lin2} and
\ref{Tab:eq:lin4}. These results show the high accuracy of the
methods and good agreements with their theoretical orders.

\begin{table}[h]
\begin{center}
\caption{The numerical results of the methods with $\rho=2$ for
the problem~\eqref{eq:lin}.}\label{Tab:eq:lin2}{\footnotesize
\begin{tabular}{lcccccc}
\hline
& & & & & &\\[-2.5mm]
$N$ & & $1000$ & $2000$ & $4000$ & $8000$ & $16000$\\[0.3mm]
\hline
& & & & & &\\[-2.5mm]
\multirow{2}{*}{Method with $p=1$} & $\texttt{ge}$ &
$4.71\times10^{-3}$ & $1.21\times10^{-3}$ & $3.34\times10^{-4}$ &
$1.07\times10^{-4}$ &
$4.25\times10^{-5}$\\[0.3mm]
& $O_N$ & & $1.96$ &
$1.86$ & $1.64$ & $1.33$\\[3mm]
\multirow{2}{*}{Method with $p=2$} & $\texttt{ge}$ &
$3.53\times10^{-4}$ & $8.83\times10^{-5}$ & $2.21\times10^{-5}$ &
$5.51\times10^{-6}$ &
$1.38\times10^{-6}$\\[0.3mm]
& $O_N$ & & $2.00$ &
$2.00$ & $2.00$ & $2.00$\\[3mm]
\multirow{2}{*}{Method with $p=3$} & $\texttt{ge}$ &
$1.06\times10^{-5}$ & $1.30\times10^{-6}$ & $1.63\times10^{-7}$ &
$2.04\times10^{-8}$ &
$2.55\times10^{-9}$\\[0.3mm]
& $O_N$ & & $2.20$ &
$3.82$ & $3.00$ & $3.00$\\[3mm]
\multirow{2}{*}{Method with $p=4$} & $\texttt{ge}$ &
$1.64\times10^{-8}$ & $9.74\times10^{-10}$ & $6.21\times10^{-11}$
& $4.24\times10^{-12}$ &
$1.36\times10^{-12}$\\[0.3mm]
& $O_N$ & & $4.07$ &
$3.97$ & $3.87$ & $1.64$\\[0.3mm]
\hline
\end{tabular}}
\end{center}
\end{table}

\begin{table}[h]
\begin{center}
\caption{The numerical results of the methods with $\rho=4$ for
the problem~\eqref{eq:lin}.}\label{Tab:eq:lin4}{\footnotesize
\begin{tabular}{lcccccc}
\hline
& & & & & &\\[-2.5mm]
$N$ & & $1000$ & $2000$ & $4000$ & $8000$ & $16000$\\[0.3mm]
\hline
& & & & & &\\[-2.5mm]
\multirow{2}{*}{Method with $p=1$} & $\texttt{ge}$ &
$7.22\times10^{-3}$ & $1.84\times10^{-3}$ & $4.91\times10^{-4}$ &
$1.47\times10^{-4}$ &
$5.39\times10^{-5}$\\[0.3mm]
& $O_N$ & & $1.97$ &
$1.91$ & $1.73$ & $1.45$\\[3mm]
\multirow{2}{*}{Method with $p=2$} & $\texttt{ge}$ &
$1.46\times10^{-}3$ & $3.65\times10^{-4}$ & $9.13\times10^{-5}$ &
$2.28\times10^{-5}$ &
$5.71\times10^{-6}$\\[0.3mm]
& $O_N$ & & $2.00$ &
$2.00$ & $2.00$ & $2.00$\\[3mm]
\multirow{2}{*}{Method with $p=3$} & $\texttt{ge}$ &
$4.59\times10^{-5}$ & $5.42\times10^{-6}$ & $6.71\times10^{-7}$ &
$8.38\times10^{-8}$ &
$1.05\times10^{-8}$\\[0.3mm]
& $O_N$ & & $3.08$ &
$3.01$ & $3.00$ & $3.00$\\[3mm]
\multirow{2}{*}{Method with $p=4$} & $\texttt{ge}$ &
$1.04\times10^{-7}$ & $6.40\times10^{-9}$ & $4.01\times10^{-10}$ &
$2.60\times10^{-11}$ &
$2.62\times10^{-12}$\\[0.3mm]
& $O_N$ & & $4.02$ &
$4.00$ & $3.95$ & $3.31$\\[0.3mm]
\hline
\end{tabular}}
\end{center}
\end{table}

As the next example, we consider the well-known Brusselator
\cite{hnw93,Jackbook2009}
\begin{align}\label{eq:bruss}
\left\{%
\begin{array}{ll}
  y_1'=1+y_1^2y_2-4y_1, & y_1(0)=1.5, \\[2mm]
  y_2'=3y_1-y_1^2y_2,& y_2(0)=3,\\
\end{array}%
\right.
\end{align}
with integration interval $[0,20]$. To compute the global error of
the methods, we use the reference solution obtained by the code
\texttt{ode45} from the \textsc{Matlab} ODE suite \cite{sr97} with
tolerances $Atol=10^{-14}$ and $Rtol=2.22045\times10^{-14}$. In
Tables \ref{Tab:eq:bruss2} and \ref{Tab:eq:bruss4}, we repeat the
numerical results reported in Tables \ref{Tab:eq:lin2} and
\ref{Tab:eq:lin4} but now for the example \eqref{eq:bruss}. Again,
the results verify the high accuracy of the methods and the errors
decrease with orders which coincide with the theoretical
expectations.

\begin{table}[h]
\begin{center}
\caption{The numerical results of the methods with $\rho=2$ for
the problem~\eqref{eq:bruss}.}\label{Tab:eq:bruss2}{\footnotesize
\begin{tabular}{lcccccc}
\hline
& & & & & &\\[-2.5mm]
$N$ & & $1000$ & $2000$ & $4000$ & $8000$ & $16000$\\[0.3mm]
\hline
& & & & & &\\[-2.5mm]
\multirow{2}{*}{Method with $p=1$} & $\texttt{ge}$ &
$3.43\times10^{-4}$ & $9.90\times10^{-5}$ & $3.00\times10^{-5}$ &
$9.94\times10^{-6}$ &
$3.68\times10^{-6}$\\[0.3mm]
& $O_N$ & & $1.79$ &
$1.72$ & $1.59$ & $1.43$\\[3mm]
\multirow{2}{*}{Method with $p=2$} & $\texttt{ge}$ &
$1.41\times10^{-5}$ & $5.23\times10^{-6}$ & $1.48\times10^{-6}$ &
$3.90\times10^{-7}$ &
$9.98\times10^{-8}$\\[0.3mm]
& $O_N$ & & $1.43$ &
$1.82$ & $1.92$ & $1.97$\\[3mm]
\multirow{2}{*}{Method with $p=3$} & $\texttt{ge}$ &
$1.92\times10^{-5}$ & $2.01\times10^{-6}$ & $2.21\times10^{-7}$ &
$2.55\times10^{-8}$ &
$3.06\times10^{-9}$\\[0.3mm]
& $O_N$ & & $3.26$ &
$3.19$ & $3.12$ & $3.06$\\[3mm]
\multirow{2}{*}{Method with $p=4$} & $\texttt{ge}$ &
$3.29\times10^{-6}$ & $3.19\times10^{-8}$ & $6.43\times10^{-10}$ &
$2.01\times10^{-11}$ &
$1.04\times10^{-12}$\\[0.3mm]
& $O_N$ & & $6.69$ &
$5.63$ & $5.00$ & $4.27$\\[0.3mm]
\hline
\end{tabular}}
\end{center}
\end{table}

\begin{table}[h]
\begin{center}
\caption{The numerical results of the methods with $\rho=4$ for
the problem~\eqref{eq:bruss}.}\label{Tab:eq:bruss4}{\footnotesize
\begin{tabular}{lcccccc}
\hline
& & & & & &\\[-2.5mm]
$N$ & & $1000$ & $2000$ & $4000$ & $8000$ & $16000$\\[0.3mm]
\hline
& & & & & &\\[-2.5mm]
\multirow{2}{*}{Method with $p=1$} & $\texttt{ge}$ &
$4.88\times10^{-4}$ & $1.40\times10^{-4}$ & $4.16\times10^{-5}$ &
$1.33\times10^{-5}$ &
$4.74\times10^{-6}$\\[0.3mm]
& $O_N$ & & $1.80$ &
$1.75$ & $1.65$ & $1.49$\\[3mm]
\multirow{2}{*}{Method with $p=2$} & $\texttt{ge}$ &
$7.18\times10^{-5}$ & $2.29\times10^{-5}$ & $6.24\times10^{-6}$ &
$1.62\times10^{-6}$ &
$4.11\times10^{-7}$\\[0.3mm]
& $O_N$ & & $1.65$ &
$1.88$ & $1.95$ & $1.98$\\[3mm]
\multirow{2}{*}{Method with $p=3$} & $\texttt{ge}$ &
$1.04\times10^{-4}$ & $9.87\times10^{-6}$ & $1.02\times10^{-6}$ &
$1.13\times10^{-7}$ &
$1.31\times10^{-8}$\\[0.3mm]
& $O_N$ & & $3.40$ &
$3.27$ & $3.17$ & $3.11$\\[3mm]
\multirow{2}{*}{Method with $p=4$} & $\texttt{ge}$ &
$7.72\times10^{-4}$ & $8.89\times10^{-8}$ & $1.40\times10^{-9}$ &
$1.82\times10^{-11}$ &
$9.71\times10^{-13}$\\[0.3mm]
& $O_N$ & & $13.08$ &
$5.99$ & $6.27$ & $4.23$\\[0.3mm]
\hline
\end{tabular}}
\end{center}
\end{table}

As the final example, we study the reaction-diffusion equation
(Brusselator with diffusion) \citep{Hairer},
\begin{equation}
\begin{aligned}\label{eq:SBRUSS}
\frac{\partial u}{\partial t}&=A+u^2v-(B+1)u+\alpha \frac{\partial ^2u}{\partial x^2},\\[2mm]
\frac{\partial v}{\partial t}&=Bu-u^2v+\alpha \frac{\partial
^2v}{\partial x^2},
\end{aligned}
\end{equation}
with $0\leq x \leq 1$, $A=1$, $B=3$, $\alpha=1/50$ and boundary
conditions
\begin{align*}
u(0,t)&=u(1,t)=1,\qquad v(0,t)=v(1,t)=3,\\
u(x,0)&=1+\sin(2\pi x), \qquad  \hspace*{-3mm}v(x,0)=3.
\end{align*}
The system of ODEs resulting from discretization of the diffusion
terms by the method of lines on a grid of $N=50$ points
$x_i=i/(N+1)$, $i=1,2,\ldots,N$, $\Delta x=1/(N+1)$, which is a
system of dimension $100$, takes the form
\begin{align*}
u'_i&=1+u_i^2v_i-4u_i+\frac{\alpha }{(\Delta x)^2}(u_{i-1}-2u_i+u_{i+1}),\\[2mm]
v'_i&=3u_i-u_i^2v_i+\frac{\alpha }{(\Delta x)^2}(v_{i-1}-2v_i+v_{i+1}),\\[2mm]
u_0(t)&=u_{N+1}(t)=1,\qquad v_0(t)=v_{N+1}(t)=3,\\[2mm]
u_i(0)&=1+\sin(2\pi x_i), \qquad  \hspace*{-3mm}v_i(0)=3,\quad
i=1,2,\ldots,N.
\end{align*}
The numerical results for this problem  with $\rho=2$ at the end
point of integration $t_{end}=10$ are given in Table
\ref{Tab:eq:Sbruss2}. n order to compute the global error of the
methods, again we use the reference solution obtained by the code
\texttt{ode45} from the \textsc{Matlab}. These results show the
capability and efficiency of the proposed methods in solving
problems in higher dimensions with expected orders.

\begin{table}[h]
\begin{center}
\caption{The numerical results of the methods with $\rho=2$ for
the
problem~\eqref{eq:SBRUSS}.}\label{Tab:eq:Sbruss2}{\footnotesize
\begin{tabular}{lcccccc}
\hline
& & & & & &\\[-2.5mm]
$N$ & & $12000$ & $13000$ & $14000$ & $15000$ & $16000$\\[0.3mm]
\hline
& & & & & &\\[-2.5mm]
\multirow{2}{*}{Method with $p=1$} & $\texttt{ge}$ &
$6.27\times10^{-6}$ & $5.51\times10^{-6}$ & $4.91\times10^{-6}$ &
$4.42\times10^{-6}$ &
$4.01\times10^{-6}$\\[0.3mm]
& $O_N$ & & $1.61$ &
$1.56$ & $1.52$ & $1.51$\\[3mm]
\multirow{2}{*}{Method with $p=2$} & $\texttt{ge}$ &
$3.03\times10^{-7}$ & $2.59\times10^{-7}$ & $2.23\times10^{-7}$ &
$1.94\times10^{-7}$ &
$1.71\times10^{-7}$\\[0.3mm]
& $O_N$ & & $1.96$ &
$2.02$ & $2.02$ & $1.96$\\[3mm]
\multirow{2}{*}{Method with $p=3$} & $\texttt{ge}$ &
$3.24\times10^{-9}$ & $2.54\times10^{-9}$ & $2.02\times10^{-9}$ &
$1.64\times10^{-9}$ &
$1.35\times10^{-9}$\\[0.3mm]
& $O_N$ & & $3.04$ &
$3.09$ & $3.02$ & $3.01$\\[3mm]
\multirow{2}{*}{Method with $p=4$} & $\texttt{ge}$ &
$1.15\times10^{-11}$ & $8.04\times10^{-12}$ & $5.99\times10^{-12}$
& $4.81\times10^{-12}$ &
$3.72\times10^{-12}$\\[0.3mm]
& $O_N$ & & $4.47$ &
$3.97$ & $3.18$ & $3.98$\\[0.3mm]
\hline
\end{tabular}}
\end{center}
\end{table}

As it has been described in  \cite{AbdCon}, the implementation of
SGLMs using Nordsieck technique, in addition to computing the
output vector, requires to approximate the Nordsieck vector at the
end of each step; then the input vector for the next step is
computed by rescaling the approximated Nordsieck vector and its
left production by an appropriate matrix; while this is not the
case for the proposed VS SDIMSIMs in which the input vector for
the next step exactly is the same output vector of the current
step. This reduces the number of required floating point
operations.
\setcounter{equation}{0} \setcounter{definition}{0}
\setcounter{theorem}{0}
\section{Conclusions}\label{Sec5}
We introduced the SGLMs in a variable stepsize environment in
which the coefficients matrices of the methods depend on the
rations of the current stepsize and the past stepsizes. By
formulating such methods, we derived their order conditions of
order $p$ and high stage order $q=p$. Construction of such methods
in a special class with unconditionally zero--stability property
for any step size pattern, up to order four, was described.
Finally, some numerical experiments were provided demonstrating
the efficiency and high accuracy of the proposed methods.


\end{document}